\newtheorem{theorem}{Theorem}[section]
\newtheorem{corollary}[theorem]{Corollary}
\newtheorem{lemma}[theorem]{Lemma}
\newtheorem{proposition}[theorem]{Proposition}
\theoremstyle{definition}
\newtheorem{definition}[theorem]{Definition}
\newtheorem{remark}[theorem]{Remark}
\newtheorem{example}[theorem]{Example}
\newcommand{\C}{\mathbb{C}}
\newcommand{\PF}{\mathcal{P}\mathcal{F}(n)}
\newcommand{\R}{\mathbb{R}}
\newcommand{\ta}{\tau_{\mathcal{A}}}
\newcommand{\tb}{\tau_{\mathcal{B}}}
\begin{document}

\title{Positively Factorizable Maps \footnote{2010 
Mathematics Subject Classification:  Primary 46L05; 
Secondary 46L60, 81R15. \textbf{Keywords:} quantum channels, factorizable maps, completely positive matrices, CPSD, self-dual cones, nonnegative matrices}}

\author[Jeremy Levick]{Jeremy Levick}
\address{Institute for Quantum Computing and Department of Mathematics, University of Guelph,
Guelph, ON, Canada}
\email{levickje@uoguelph.ca }

\author[Mizanur Rahaman]{Mizanur Rahaman}
\address{BITS Pilani KK Birla Goa Campus, Goa India}
\email{mizanurr@goa.bits-pilani.ac.in}
\begin{abstract}
We initiate a study of linear maps on $M_n(\C)$ that have the property that they factor through a tracial von Neumann algebra $(\mathcal{A,\tau})$ via operators $Z\in M_n(\mathcal{A})$ whose entries consist of positive elements from the von-Neumann algebra. These maps often arise in the context of non-local games especially in the synchronous case. We establish a connection with the convex sets in $\mathbb{R}^n$ containing self-dual cones and the existence of these maps. The Choi matrix of a map of this kind which factor through an abelian von-Neumann algebra turns out to be a completely positive (CP) matrix.   
 We fully characterize positively factorizable  maps whose Choi rank is 2. We also provide some applications of this analysis in finding doubly nonnegative matrices which are not CPSD. A special class of these  examples are found from the concept of Unextendible Product Bases in quantum information theory.
\end{abstract}

\maketitle

\section{Introduction} 
We study trace preserving completely positive maps (quantum channels) $\Phi:M_n(\C)\rightarrow M_n(\C)$ such that there exists a finite von Neumann algebra $\mathcal{A}$ with a normal faithful trace $\tau$ and an operator $Z\in M_n(\mathcal{A})$ whose entries are all positive elements of $\mathcal{A}$ and $\Phi$ is given by \begin{equation}\label{equ-def-intro}
\Phi(X)=id\otimes \tau(Z(X\otimes 1_{\mathcal{A}})Z^*), \ \forall X\in M_n(\C).
\end{equation}
We call such maps \textbf{positively factorizable} and denote this set as $\PF$.

One of the motivations for studying these maps is the frequent occurrence of these maps in the context of non-local games, especially synchronous non-local games (\cite{psstw},\cite{hmkps},\cite{ortiz-paulsen}). Recall that a probability density $p(a,b|x,y)$ in a synchronous game between two players with input set $I$ of cardinality $n$ and output set $O$ of cardinality $k$ is given by a tracial C$^*$-algebra $(\mathcal{A},\tau)$ generated by projections $\{Q_{x,a}\}_{x,a}$ satisfying $\sum_{a=1}^k Q_{x,a}=1$, $\forall x$, such that 
\[p(a,b|x,y)=\tau(Q_{x,a}Q_{y,b}).\]
Given such a probability density $p(a,b|x,y)$, one defines a map $\Phi_{p}:M_n(\C)\rightarrow M_k(\C)$ by
\[\Phi_p(E_{x,y})=\sum_{a,b}p(a,b|x,y)E_{a,b},\]
where $\{E_{i,j}\}$ denote the matrix units and extend $\Phi_p$ by linearity. Then it was proven in \cite{ortiz-paulsen} that if $p(a,b|x,y)$ is a synchronous density, then $\Phi_p$ is a completely positive map. 
Maps of this form were further analyzed in a more restrictive class of games, called bisynchronous games in \cite{PR}. It can be seen that these maps are all examples of positively factorizable maps when $n=k$, where the operator $Z$ given in the Equation \ref{equ-def-intro} is obtained by
\[Z=(Q_{x,a})\in M_n(\mathcal{A}).\] 
Maps on $M_n(\C)$ which factor through a tracial von-Neumann algebra have been introduced by Anantharaman-Delaroche (\cite{A-D}) and analyzed by Haagerup and Musat (\cite{H-M1},\cite{H-M2}). In their notion of factorizability the Equation \ref{equ-def-intro} holds with $Z$ being a unitary operator in $M_n(\mathcal{A})$. 
The notion of factorizability we are concerned with is intrinsically different from the factorizable maps studied by Haagerup and Musat. It turns out that the Choi matrix of a  positively factorizable map lies in the closure of the set of completely positive semidefinite(CPSD) matrices (see \cite{Laurent1}, \cite{roberson})-a cone which has been recently introduced to exhibit linear conic formulations of various quantum graph parameters. In \cite{relaxation} the authors studied linear maps whose Choi matrices lie in various cones of symmetric matrices. Our investigation here is closer to the spirit of this approach. However, in \cite{relaxation} the analysis has been carried out keeping the graph isomorphism game (\cite{graph-iso}) in the background. In our treatment we emphasize more the general theory of completely positive maps and convex cones. 

\section{Notation and Basic Definitions}

We will often use the symbol $\succeq$ to represent an element of a C$^*$-algebra or a matrix to be positive semidefinite, while we use $\geq 0$ to represent entrywise non-negativity. So an $A\succeq 0$ means the element $A$ is positive semidefinite (or simply positive) and $A\geq 0$, means the element $A$ is entrywise nonnegative.  

In all that follows, $\Phi$ is a quantum channel, a trace-preserving completely positive map on $M_n(\C)$, with Kraus operators $\{K_i\}_{i=1}^d\subseteq M_n(\C)$.
That is, one can represent $\Phi$ as follows 
\[\Phi(X)=\sum_{i=1}^d K_i X K_i^*, \ \forall X\in M_n(\C).\]
The trace-preserving condition is equivalent to having $\sum_{i=1}^d K_i^*K_i=1$. The Choi-rank of $\Phi$ is the minimal number of linearly independent $K_i$'s required to represent $\Phi$ as above.
 The Choi matrix of $\Phi$ is $$J(\Phi) = \sum_{i,j=1}^n E_{ij} \otimes\Phi(E_{ij}).$$
 
\begin{definition} A channel is \emph{factorizable} (see \cite{H-M1}) if either of the following equivalent conditions hold:
\begin{enumerate}
\item There exists a von Neumann algebra $\mathcal{A}$ with faithful trace $\tau$ such that 
$$\Phi(X) = (\mathrm{id}\otimes \tau)\biggl( U (X\otimes I_{\mathcal{A}})U^*\biggr)$$ where $U$ is a unitary in $M_n(\C) \otimes \mathcal{A}$
\item There exists a von Neumann algebra $\mathcal{A}$ with trace $\tau$ and elements $\{A_i\}_{i=1}^d$ satisfying 
$$\tau(A_i^*A_j) = \delta_{ij}$$ such that 

$$U := \sum_{i=1}^d K_i \otimes A_i$$ is unitary.
\end{enumerate}
\end{definition}

The equivalence of the two conditions is seen as follows: 

If $(2)$ holds, then 

\begin{align*} (\mathrm{id}\otimes \tau)\biggl(U(X\otimes I_{\mathcal{A}})U^*\biggr) & = \sum_{i,j=1}^d K_i X K_j^* \tau (A_iA_j^*) \\
& = \sum_{i=1}^d K_i X K_i^* \\
& = \Phi(X)\end{align*}

If $(1)$ holds, we assume with no loss of generality that $\{K_i\}_{i=1}^d$ are linearly independent, so we can complete them to a basis $\{K_i\}_{i=1}^{n^2}$. Then, $U$ can be expressed in terms of this basis as 
$$U = \sum_{i=1}^{n^2} K_i \otimes A_i$$ for some $A_i \in \mathcal{A}$. 
Then $(1)$ implies that 

$$\sum_{i=1}^d K_i X K_i^* = \sum_{i=1}^{n^2} K_iXK_j^* \tau (A_i^*A_j);$$

Let $L_i$, $R_j$ be left-multiplication by $K_i$ and right-multiplication by $K_j^*$ respectively; we can represent $L_i$, $R_j$ acting on $\C^{n^2}$ by $I\otimes K_i$ and $\overline{K_j}\otimes I$ respectively, so we have that 

$$\sum_{i=1}^d \overline{K_i}\otimes K_i = \sum_{i=1}^{n^2} \tau(A_i^*A_j)\overline{K_j}\otimes K_i$$ as these two operators act the same on each $x \in \C^{n^2}$. However, the set $\{\overline{K_j}\otimes K_i\}$ is linearly independent, since without loss of generality we can pick $\{K_i\}_{i=1}^{n^2}$ to be mutually orthogonal, so 

$$\mathrm{tr}\bigl[ (\overline{K_j}\otimes K_i)^*(\overline{K_k}\otimes K_l)\bigr] = \mathrm{tr}(K_j^T\overline{K_k})\mathrm{tr}(K_i^*K_l) = \delta_{jk}\delta_{il}.$$

So we must have that $\tau(A_i^*A_j) = \delta_{ij}$ for $i,j\leq d$ and $0$ otherwise. Since for all $i>d$, $\tau(A_i^*A_i) = 0$, $A_i = 0$ and so 
$$U = \sum_{i=1}^d K_i \otimes A_i.$$

By analogy, we define the following:
\begin{definition}\label{def:PF}
 A channel $\Phi$ with $d$ linearly independent Kraus operators $\{K_1,\cdots,K_d\}$, is \textbf{positively factorizable ($\PF$)} if it satisfies either of the following equivalent conditions:
\begin{enumerate}
\item There exists a von Neumann algebra $\mathcal{A}$ with faithful trace $\tau$ and a matrix $Z \in M_n(\C) \otimes \mathcal{A}$ such that 
$$\Phi(X) = (\mathrm{id}\otimes \tau)\bigl( Z (X \otimes I_{\mathcal{A}})Z^*\bigr)$$ where 
the $(i,j)$ block of $Z$, $Z(i,j)$, is a positive element in $\mathcal{A}$, for all $(i,j)$.
\item There exists a von Neumann algebra $\mathcal{A}$ with faithful trace $\tau$ and elements $\{A_i\}_{i=1}^d$ satisfying $\tau(A_i^*A_j)=\delta_{ij}$ such that the matrix
$$Z = \sum_{i=1}^d K_i \otimes A_i$$ has its $(i,j)$ block a positive element of $\mathcal{A}$ for all $i,j \leq n$. 
\end{enumerate}
\end{definition}

The equivalence of these two conditions is essentially the same proof as for regular factorizability: 
$(2)$ implies $(1)$ is simply the result of the computation 

$$(\mathrm{id}\otimes \tau)\bigl(Z(X \otimes I_{\mathcal{A}})Z^*\bigr) = \sum_{i,j=1}^d K_i X K_j^* \tau(A_i^*A_j) = \Phi(X)$$

and $(1)$ implies $(2)$ again involves writing $Z = \sum_{i=1}^{n^2}K_i \otimes A_i$ where we complete $\{K_i\}_{i=1}^d$ to a full basis for $M_n(\C)$ and then use linear independence to show that $\tau(A_i^*A_j) = \delta_{ij}$ for $i,j\leq d$ and $0$ otherwise. 
\begin{definition} Let $A = (A_1,\cdots, A_d) \subseteq \mathcal{A}^d$ be a $d$-tuple of operators in $\mathcal{A}$; the \emph{joint numerical range} is the subset of $\C^d$ given by 

$$W(A) = \{ (x^*A_1 x, \cdots, x^*A_dx) : \|x\|=1\};$$

this is written as if $A \subseteq \mathcal{B}(\mathcal{H})$ for some Hilbert space $\mathcal{H}$, in which case $x$ is a vector from the unit ball of $\mathcal{H}$. 
\end{definition}
Recall that a cone $C$ in a vector space $V$ is a subset such that $\alpha x$ is in $V$ for any $x\in C$ and any positive real $\alpha$. 
\begin{definition} The \emph{dual cone} of any subset $S$ of $\C^d$ is the set 
$$S^* := \{ y: \langle y,x\rangle \geq 0 \ \forall \ x \in S\}.$$
\end{definition}

$S^*$ is always a closed convex cone, no matter what kind of set $S$ is: this is because all positive linear combinations of vectors in $S^*$ remain in $S^*$ since 

$$\langle \sum_i p_i y_i,x\rangle = \sum_i p_i \langle y_i,x\rangle$$ so if $\langle y_i,x\rangle \geq 0$ for all $x \in S$, the LHS is positive too for any $p_i \geq 0$. 

Closure follows because of the continuity of the inner product. 

\begin{proposition} \label{Prop:W(A)^*=D(A)}
Let $A \subseteq \mathcal{A}^d$ be a $d$-tuple of Hermitian operators; then $W(A)^*$ is the set of coefficients of a linear combination $y = (y_1,\cdots, y_d)$ such that 
$$\sum_{i=1}^d y_i A_i \succeq 0.$$
\end{proposition}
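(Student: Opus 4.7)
The plan is to prove the two inclusions defining the equality $W(A)^* = \{y : \sum_{i=1}^d y_i A_i \succeq 0\}$ by pushing the coefficients $y_i$ in and out of the quadratic form $x \mapsto x^*(\cdot)\,x$. First I would fix a faithful $*$-representation of $\mathcal{A}$ on some Hilbert space $\mathcal{H}$, so that $W(A)$ is literally defined via unit vectors $x \in \mathcal{H}$ and positivity in $\mathcal{A}$ coincides with operator positivity on $\mathcal{H}$.

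For the $(\Leftarrow)$ direction, assume $M := \sum_{i=1}^d y_i A_i \succeq 0$. Then for any unit vector $x \in \mathcal{H}$,
\[
\bigl\langle y,\,(x^* A_1 x,\ldots,x^*A_d x)\bigr\rangle
= \sum_{i=1}^d y_i \,x^* A_i x
= x^* M x \;\ge\; 0,
\]
so $y$ pairs nonnegatively with every element of $W(A)$, i.e. $y \in W(A)^*$.

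For $(\Rightarrow)$, assume $y \in W(A)^*$. Reading the same chain of equalities right-to-left shows $x^*Mx \ge 0$ for every unit $x \in \mathcal{H}$, and hence for every $x$ by positive homogeneity. Thus $M$ is positive as an operator on $\mathcal{H}$, and by faithfulness of the representation, $M \succeq 0$ as an element of $\mathcal{A}$.

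The argument is essentially definition-chasing; there is no serious obstacle. The only subtlety worth flagging is that since each $A_i$ is Hermitian, $x^* A_i x \in \mathbb{R}$, so $W(A) \subseteq \mathbb{R}^d$, and the proposition should be read as describing $W(A)^* \cap \mathbb{R}^d$, with $y$ real. Otherwise the imaginary part of $y$ contributes nothing to the real pairing with $W(A)$ and the combination $\sum y_i A_i$ need not even be self-adjoint, so the statement of positivity would be ambiguous. With $y$ taken to be real, the equivalence above is exact.
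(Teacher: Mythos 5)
Your proof is correct and follows essentially the same route as the paper's: both rest on the identity $\langle y,z\rangle = x^*\bigl(\sum_{i=1}^d y_i A_i\bigr)x$ for $z=(x^*A_1x,\dots,x^*A_dx)\in W(A)$, so that nonnegativity of the pairing over $W(A)$ is equivalent to positivity of the quadratic form, hence of the operator. Your added remarks on fixing a faithful representation and taking $y$ real are reasonable clarifications of conventions the paper leaves implicit, but they do not change the argument.
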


\begin{proof} This follows from the observation that if $W(A) \ni z = (x^*A_1x, \cdots, x^*A_dx)$, then 

$$\langle y,z\rangle = x^*\bigl(\sum_{i=1}^d y_i A_i\bigr)x$$ so $\langle y,z\rangle \geq 0$ for all such $z$ is equivalent to 
$$x^*\bigl(\sum_{i=1}^d y_i A_i \bigr)x \geq 0$$ for all $x$, which is equivalent to the $\sum_{i=1}^d y_i A_i \succeq 0$.
\end{proof}
\begin{definition} For a tuple of Hermitian operators $A = (A_1,\cdots, A_d)$, define the set \[D(A)=\{y=(y_1,\cdots,y_d) \in \R^d : \sum_{i=1}^d y_iA_i \succeq 0\},\] where $\succeq 0$ indicates a positive semidefinite operator. 
\end{definition}
It follows that this set is closed convex cone. This set is known as the classical spectrahedron associated with the operators $A_1,\cdots,A_d$ (see \cite{spectrahedra} and references therein).

Before we further explore these concepts in our setting, we need a lemma.
\begin{lemma}\label{lemma:real kraus}
If $\Phi$ is a map on $M_n(\C)$ with Kraus operators $\{K_1,\cdots, K_d\}$ and belongs to the set $\PF$ by means of $Z=\sum_{i=1}^d K_i\otimes A_i$, then one can choose another set of Kraus operators $\{K_i'\}$ representing $\Phi$ such that  $K_i'\in M_n(\mathbb{R})$, for all $i$.
\end{lemma}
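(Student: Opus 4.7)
The plan is to show that the Choi matrix $J(\Phi)$ is entrywise real (in fact nonnegative) and then to apply the real spectral theorem to extract a Kraus decomposition with matrices in $M_n(\mathbb{R})$. The leverage comes entirely from the positivity of the blocks of $Z$, which forces each entry of $J(\Phi)$ to be a nonnegative real number.

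First I would write $Z\in M_n(\mathcal{A})$ as an $n\times n$ block matrix with $(a,b)$-entry $Z(a,b)\in\mathcal{A}$ and compute, directly from $\Phi(X)=(\mathrm{id}\otimes\tau)(Z(X\otimes 1_{\mathcal{A}})Z^*)$ applied to $X=E_{ij}$, that
\[
(\Phi(E_{ij}))_{ab}=\tau\bigl(Z(a,i)\,Z(b,j)^*\bigr),
\]
so $J(\Phi)_{(i,a),(j,b)}=\tau(Z(a,i)Z(b,j)^*)$. By hypothesis each $Z(a,b)\succeq 0$, so in particular $Z(b,j)^*=Z(b,j)$, and the trace of a product of two positive elements in a tracial von Neumann algebra is a nonnegative real number (since $\tau(PQ)=\tau(P^{1/2}QP^{1/2})$ and $P^{1/2}QP^{1/2}\succeq 0$). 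Hence every entry of $J(\Phi)$ lies in $\mathbb{R}_{\geq 0}$.

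Being simultaneously positive semidefinite in $M_{n^2}(\mathbb{C})$ and entrywise real, $J(\Phi)$ is a real symmetric PSD matrix in $M_{n^2}(\mathbb{R})$. The real spectral theorem then produces a decomposition $J(\Phi)=\sum_i w_iw_i^T$ with $w_i\in\mathbb{R}^{n^2}$, and the standard bijection between rank-one decompositions of the Choi matrix and Kraus decompositions of $\Phi$ --- in which $w_i=(I\otimes K_i')\ket{\psi}$ for the unnormalized maximally entangled vector $\ket{\psi}=\sum_a\ket{a}\otimes\ket{a}$ --- sends real $w_i$'s to real Kraus operators $K_i'\in M_n(\mathbb{R})$ representing $\Phi$.

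No step here is a real obstacle: the entire content of the lemma is the initial observation that positivity of the entries of $Z$ collapses the defining $\tau$-pairings of $J(\Phi)$ to nonnegative reals, after which passing from a real PSD Choi matrix to real Kraus operators is a standard linear algebra step. The only minor care needed is fixing a vectorization convention $\mathbb{C}^{n^2}\leftrightarrow M_n(\mathbb{C})$ that intertwines Choi decompositions with Kraus decompositions, but this is a real-linear bijection and so respects the reality condition.
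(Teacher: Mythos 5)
Your proposal is correct and follows essentially the same route as the paper: you show each entry of $J(\Phi)$ equals $\tau(Z(a,i)Z(b,j))$ with both factors positive, hence is a nonnegative real, and then extract real Kraus operators from a real rank-one (eigenvector) decomposition of the real PSD Choi matrix. The only difference is presentational --- you spell out the vectorization bijection explicitly where the paper simply cites that a real PSD matrix has real eigenvectors.
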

\begin{proof}
We first observe that the Choi matrix is a positive semidefinite matrix which is entrywise real. To see this note that if $\Phi$ is in $\PF$, then 
there exists a finite von-Neumann algebra $(\mathcal{A},\tau)$ and an operators $Z=(Z({i,j}))\in M_n\otimes \mathcal{A}$ with 
$Z({i,j})$ positive  in $\mathcal{A}$ for all $(i,j)$ such that 
\[\Phi(x)=id\otimes \tau(Z(x\otimes 1)Z^*).\]
Then the Choi matrix
\[J(\Phi)=id\otimes \Phi(\sum_{i,j}E_{i,j}\otimes E_{i,j})=\sum_{i,j}E_{i,j}\otimes \sum_{k,l}\tau(Z({k,i})Z({l,j}))E_{k,l}.\]
Clearly the Choi matrix is entry wise real as $\tau(Z({k,i})Z({l,j}))\geq 0$. As the Kraus operators arise from the eigenvectors of the Choi matrix, the assertion follows from the fact that any positive semidefinite real matrix has a basis of real eigenvectors. 

\end{proof}

\begin{proposition}\label{makespos}$\Phi$ is in $\PF$ by means of the matrix $Z = \sum_{i=1}^d K_i \otimes A_i$ for $A = (A_1,\cdots, A_d) \subseteq \mathcal{A}$ if and only if 
$$\sum_{i=1}^d y_i K_i \geq 0$$ for all $y=(y_1,\cdots,y_d) \in W(A)$, where $\geq 0$ here means entrywise nonnegative.
\end{proposition}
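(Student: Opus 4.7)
The plan is to unpack blockwise what it means for every entry of $Z = \sum_{i=1}^d K_i \otimes A_i$ to be a positive element of $\mathcal{A}$ and then to recognize that condition as membership in the dual of the joint numerical range. First, invoking Lemma \ref{lemma:real kraus}, I may assume the $K_i$'s are real and linearly independent. Each block $Z(p,q)$ is then the real linear combination $\sum_{i=1}^d (K_i)_{pq}\, A_i$. As positive elements are self-adjoint, this combination must equal its adjoint; splitting each $A_i = B_i + iC_i$ into Hermitian and anti-Hermitian parts and using linear independence of the $K_i$'s (i.e., full column rank of the $n^2 \times d$ array $[(K_i)_{pq}]$) forces $C_i = 0$ for every $i$. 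So the $A_i$'s are automatically Hermitian, placing us squarely in the hypothesis of Proposition \ref{Prop:W(A)^*=D(A)}.

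With the $A_i$'s Hermitian, the condition that each block $Z(p,q) \succeq 0$ is precisely that the real vector $v^{(p,q)} := ((K_1)_{pq},\dots,(K_d)_{pq}) \in \R^d$ belongs to the spectrahedron $D(A)$. Proposition \ref{Prop:W(A)^*=D(A)} identifies $D(A)$ with the dual cone $W(A)^*$, so this is equivalent to $\langle v^{(p,q)}, y \rangle = \sum_{i=1}^d y_i (K_i)_{pq} \geq 0$ for every $y \in W(A)$. The quantity $\sum_i y_i (K_i)_{pq}$ is nothing but the $(p,q)$-entry of the matrix $\sum_{i=1}^d y_i K_i$; quantifying over all $(p,q)$ then says every entry of $\sum_i y_i K_i$ is nonnegative for all $y \in W(A)$, which is the asserted equivalence.

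I expect the only subtle step to be the automatic self-adjointness of the $A_i$'s, since the statement of the proposition refers to $W(A)$ without any Hermiticity assumption while Proposition \ref{Prop:W(A)^*=D(A)} requires it. The Lemma combined with linear independence of the Kraus operators closes that gap, after which the remainder is a pure unwinding of the definition of the dual cone applied one block at a time.
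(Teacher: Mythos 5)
Your forward direction is fine, but the converse has a circularity. To invoke Proposition \ref{Prop:W(A)^*=D(A)} (i.e.\ $D(A)=W(A)^*$) you need the $A_i$ to be Hermitian, and you obtain Hermiticity by arguing that each block $Z(p,q)$, being \emph{positive}, is self-adjoint. But block-positivity is exactly the conclusion you must prove in the converse direction (entrywise condition $\Rightarrow$ $\PF$); under that direction's hypothesis you only know $\sum_i y_i(K_i)_{pq}\geq 0$ for $y\in W(A)$, not that the blocks are positive, so the self-adjointness step is not available and the duality chain cannot be run as an ``equivalence'' in one go. The appeal to Lemma \ref{lemma:real kraus} has the same flavor of circularity: that lemma presupposes $\Phi\in\PF$ (again the conclusion of the converse), and it produces a \emph{different} set of real Kraus operators rather than making the given $K_i$ real, whereas the proposition is a statement about the specific pair $(K,A)$ appearing in $Z$; a genuine WLOG here would require observing that both sides of the equivalence depend only on $Z$ through the quantities $x^*Z(p,q)x$, which you do not argue.

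The gap is repairable, but the repair is different from what you wrote: assuming real, linearly independent $K_i$, entrywise nonnegativity of $\sum_i (x^*A_ix)K_i$ forces $\sum_i \mathrm{Im}(x^*A_ix)K_i=0$, hence $x^*A_ix\in\R$ for all $x$ and so $A_i=A_i^*$ — i.e.\ Hermiticity must be extracted from the entrywise hypothesis, not from block positivity. The paper's own proof sidesteps all of this: it simply computes $x^*Z(p,q)x=\sum_i (K_i)_{pq}\,x^*A_ix$, so block positivity gives the entrywise condition immediately, and conversely $x^*Z(p,q)x\geq 0$ for all $x$ already implies $Z(p,q)\succeq 0$ over a complex Hilbert space (self-adjointness comes for free by polarization), with no need for real Kraus operators, linear independence, or Hermitian $A_i$. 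Your route through $D(A)=W(A)^*$ is a reasonable reorganization, but as written the converse is not proved.
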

\begin{proof} Suppose $Z = \sum_{i=1}^d K_i \otimes A_i$ has the property that each block $Z(i,j)$ is positive. By expanding $K_i = \sum_{p,q=1}^n k^{(i)}_{p,q} E_{pq}$ we see that 
$$Z(p,q) = \sum_{i=1}^d k^{(i)}_{p,q} A_i$$ and so if this is positive, then 
$$0 \leq x^*Z(p,q)x = \sum_{i=1}^d k^{(i)}_{p,q} x^*A_i x = \sum_{i=1}^d y_i k^{(i)}_{pq}$$ for all $y = (y_1,\cdots, y_d) = (x^*A_1x, \cdots, x^*A_d x) \in W(A)$. 

Thus, $$\sum_{i=1}^d y_i K_i$$ has as its $(p,q)$ entry $\sum_{i=1}^d y_k k^{(i)}_{pq} \geq 0$. 

The converse follows from reversing the steps.
\end{proof}

\begin{definition} For a tuple of matrices $K = (K_1,\cdots, K_d) \in M_n(\mathbb{R})^d$, define the \emph{nonnegativity cone}, $$NC(K) : = \{ v \in \C^d: \sum_{i=1}^d v_i K_i \geq 0\}.$$
Again, we use $\geq 0$ to mean entrywise nonnegativity. 
\end{definition}
Note that by the Lemma \ref{lemma:real kraus}  we know the Kraus operators can be chosen to be real and also we can choose them to be linearly independent. Now if $v=(v_1,\cdots,v_d)\in \mathbb{C}^d$ such that $\sum_{i}v_iK_i\geq 0$, write $v=(v_1,\cdots,v_d)=(x_1+iy_1,\cdots,x_d+iy_d)$ splitting each component into real and imaginary parts. Then we have 
\[\sum_{j=1}^d x_jK_j+i\sum_{j=1}^dy_jK_j\geq 0.\]
As the second term is entirely imaginary, we must have $\sum_{j=1}^dy_jK_j=0$. As $\{K_i\}$ are linearly independent, we must have $y_j=0$, for all $j$, so $v$ is actually a real vector. Moreover, since we want to find $A=(A_1,\cdots,A_d)$ such that $W(A)=\{(v^*A_1v,\cdots,v^*A_dv)\}\subseteq NC(K)$, we must require $W(A)\subseteq \mathbb{R}^d$. This means $v^*A_iv\in \mathbb{R}$ for each $i$, and so all $A_i$ must be Hermitian. So from now on, we will define 
$$NC(K) : = \{ v=(v_1,\cdots,v_d) \in \R^d: \sum_{i=1}^d v_i K_i \geq 0\}.$$

We also use the notation $K(v) := \sum_{i=1}^d v_i K_i$ for a vector $v \in \R^d$. 

That $NC(K)$ is a cone follows from the fact that if $v_i \in NC(K)$, for any positive coefficients $p_i$, if $v = \sum_i p_i v_i$, then 
\begin{align*} \sum_{i=1}^d v_i K_i & = \sum_{i=1}^d \sum_j p_j v_{ji} K_i \\
& = \sum_j p_j \sum_i v_{ji}K_i \\
& = \sum_j p_j K(v_j)\end{align*} which is a positive combination of entrywise positive matrices, and so must be positive.


We summarize these observations in the following corollary.
 
\begin{corollary}\label{NCinW} Let $\Phi$ be a quantum channel with Kraus operators $\{K_i\}_{i=1}^d$, or $K = (K_1,\cdots, K_d)$. Then $\Phi$ is in $\PF$ if and only if there exists a von Neumann algebra $\mathcal{A}$ with faithful trace $\tau$ and a tuple of operators $A = (A_1,\cdots, A_d)\in \mathcal{A}^d$ satisfying $\tau(A_i^*A_j) = \delta_{ij}$ such that 

$$NC(K)^* \subseteq D(A).$$
\end{corollary}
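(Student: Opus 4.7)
The plan is to obtain this corollary as a direct consequence of Proposition \ref{makespos} and Proposition \ref{Prop:W(A)^*=D(A)}, glued together by the bipolar duality of cones in $\R^d$.

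First, I would invoke Proposition \ref{makespos}: for a fixed tuple $A = (A_1,\dots,A_d)$ with $\tau(A_i^*A_j)=\delta_{ij}$, the channel $\Phi$ lies in $\PF$ via $Z=\sum_i K_i\otimes A_i$ if and only if $W(A)\subseteq NC(K)$. As observed in the paragraph following the definition of $NC(K)$ as a subset of $\R^d$, the vectors witnessing non-negativity must be real, which forces the $A_i$'s to be Hermitian so that $W(A)\subseteq\R^d$. With Hermitian $A_i$, Proposition \ref{Prop:W(A)^*=D(A)} applies and yields $W(A)^*=D(A)$.

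Next, I would translate the inclusion $W(A)\subseteq NC(K)$ into the claimed inclusion $NC(K)^*\subseteq D(A)$ using cone duality. The key preliminary fact is that $NC(K)$ is a closed convex cone in $\R^d$: it is already shown in the excerpt to be a cone; it is convex because a non-negative combination of entrywise non-negative matrices is entrywise non-negative; and it is closed because $v\mapsto\sum_i v_iK_i$ is continuous and the entrywise non-negative matrices form a closed set. By the bipolar theorem this gives $NC(K)^{**}=NC(K)$, so for any $S\subseteq\R^d$,
\[
S\subseteq NC(K)\ \Longleftrightarrow\ NC(K)^*\subseteq S^*.
\]
Applying this with $S=W(A)$ and substituting $W(A)^*=D(A)$ converts the condition from Proposition \ref{makespos} into exactly $NC(K)^*\subseteq D(A)$, establishing the biconditional claimed in the corollary.

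There is no real obstacle: the whole argument is essentially a duality rewriting of two earlier propositions. The only points that require a moment of care are verifying that $NC(K)$ is genuinely a closed convex cone so that the bipolar identity $NC(K)^{**}=NC(K)$ is available, and remembering that the passage through Proposition \ref{Prop:W(A)^*=D(A)} implicitly uses the Hermiticity of the $A_i$'s, which is forced by the real structure of $NC(K)$ together with linear independence of the $K_i$.
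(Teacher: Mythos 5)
Your argument is correct and follows essentially the same route as the paper: combine Proposition \ref{makespos} (PF via $Z=\sum_i K_i\otimes A_i$ iff $W(A)\subseteq NC(K)$) with Proposition \ref{Prop:W(A)^*=D(A)} and the inclusion-reversing property of duals, using that $NC(K)$ is a closed convex cone so $NC(K)^{**}=NC(K)$ gives the reverse implication. Your added care about closedness/convexity of $NC(K)$ and the Hermiticity of the $A_i$'s just makes explicit what the paper leaves implicit.
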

\begin{proof} This is essentially just a restatement of previous results with the inclusion reversed due to the duality. For example, if the inclusion holds, then there exists a tuple $A = (A_1,\cdots, A_d)$ such that for all $x \in \mathcal{B}(\mathcal{H})$, $K((x^*A_1,\cdots, x^*A_d))\geq 0$, i.e.
$$\sum_{i=1}^d x^*A_ix K_i \geq 0$$ and so by Theorem \ref{makespos}, $\Phi$ is in $\PF$. 

Now, as noted earlier, for any subset $S\subseteq \C^d$ the dual cone is always a closed convex cone, regardless of whether $S$ is; if $S$ is itself a closed convex cone then $(S^*)^* = S$. Taking the dual is inclusion reversing:

$$S \subset K \Rightarrow K^* \subseteq S^*.$$

Thus, the condition that $NC(K) \supseteq W(A)$ for some trace-orthonormal $A$ may be expressed alternatively by using the Proposition \ref{Prop:W(A)^*=D(A)} as 
$$ NC(K)^* \subseteq D(A)$$ for some trace-orthonormal $A$.
\end{proof}
\begin{remark}
Note that although the joint numerical range $W(A)$ depends on the representation of the tuple $A=(A_1.\cdots, A_d)$ onto some Hilbert space, the set $D(A)$ is independent of any representation!
\end{remark}

Recall that a self-dual cone $C\in \R^d$ is one with $C^*=C$. Examples of self-dual cones are the nonnegative orthant which consists of of all $x\in \mathbb{R}^n$ with nonnegative components, the $n$-dimensional ice cream cone: \[K_n=\{x\in \mathbb{R}^n: {(x_1^2+\cdots+x_{n-1}^2)}^{\frac{1}{2}}\leq x_n\},\] the cone of positive semidefinite matrices in the real space of all Hermitian matrices (see \cite{barker1976self} for more examples).  
For Euclidian cones, there is an interesting fact concerning self dual cones: 
\begin{theorem}[Barker-Foran, see \cite{barker1976self}]\label{barker}
If $C\subset \R^d$ is a cone such that $C\subset C^*$, then there is a self-dual cone $K$ such that $C\subset K=K^*\subset C^*$.
\end{theorem}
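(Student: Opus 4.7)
The plan is to apply Zorn's lemma to the family of ``subdual'' closed convex cones wedged between $C$ and $C^*$, and to show that any maximal such cone must automatically be self-dual. First I would reduce to the case that $C$ is itself a closed convex cone: replacing $C$ by its closed convex hull leaves the dual cone unchanged and is still contained in $C^*$ (since $C^*$ is closed and convex), so the hypothesis persists and any $K$ produced will contain the original $C$.

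Next, let $\mathcal{F}$ denote the collection of closed convex cones $L \subset \mathbb{R}^d$ satisfying $C \subset L \subset C^*$ and $L \subset L^*$, partially ordered by inclusion. This is nonempty since $C \in \mathcal{F}$. To invoke Zorn's lemma I need every chain $\{L_\alpha\}$ to have an upper bound, which I take to be $L := \overline{\bigcup_\alpha L_\alpha}$. That $L$ is a closed convex cone contained in $C^*$ is routine (using the chain property to place any two points of the union inside a common $L_\alpha$, together with the fact that $C^*$ is closed and convex). The crucial subdual property $L \subset L^*$ is checked by approximating any pair $x, y \in L$ by sequences $x_n, y_n$ from the union; by the chain property one may assume $x_n, y_n \in L_{\alpha_n}$ for a single $\alpha_n$, so $\langle x_n, y_n\rangle \geq 0$, and continuity of the inner product delivers $\langle x, y\rangle \geq 0$. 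Zorn then supplies a maximal element $K \in \mathcal{F}$.

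Finally I would show $K = K^*$. Suppose not and pick $y \in K^* \setminus K$; set $K' := \overline{K + [0,\infty)\, y}$. This is a closed convex cone strictly larger than $K$, and since $C \subset K$ forces $K^* \subset C^*$, we have $y \in C^*$ and hence $K' \subset C^*$. To verify $K' \subset (K')^*$, a direct expansion
\[\langle a + sy,\, b + ty\rangle = \langle a,b\rangle + s\langle y,b\rangle + t\langle a,y\rangle + st\langle y,y\rangle\]
for $a, b \in K$ and $s, t \geq 0$ realizes the right-hand side as a sum of four nonnegative terms (using $K \subset K^*$ for $\langle a,b\rangle$ and $y \in K^*$ for the two cross terms); passing to the closure preserves nonnegativity of the inner product, since $\overline{S}^* = S^*$ for any $S$. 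Hence $K' \in \mathcal{F}$ strictly extends $K$, contradicting maximality, so $K = K^*$.

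The step I expect to be the main obstacle is the upper-bound verification inside Zorn's lemma, specifically the check that the closure of the union of a chain of subdual cones is still subdual. The argument is short once one exploits comparability in the chain to locate approximating sequences inside a single member, but it requires care in coordinating the closure operation with the subdual condition, rather than merely with convexity and conicity.
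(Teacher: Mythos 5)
The paper itself offers no proof of this statement; it is imported verbatim from Barker--Foran, so there is nothing internal to compare against. Your argument is correct and is essentially the classical one: reduce to a closed convex cone, apply Zorn's lemma to the family of subdual closed convex cones between $C$ and $C^*$ (the closure-of-union upper-bound check via comparability and continuity of the inner product is sound), and show a maximal element $K$ must satisfy $K=K^*$, since otherwise adjoining a ray through some $y\in K^*\setminus K$ and taking closures produces a strictly larger subdual cone inside $C^*$, as your four-term expansion of $\langle a+sy,\,b+ty\rangle$ verifies. In short, a complete and correct self-contained proof of the cited theorem, in the same spirit as the original source.
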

We are ready for our main theorem of the section.
\begin{theorem} \label{thm-main}
Let $\Phi$ be a channel with Kraus operators $K = (K_1,\cdots, K_d)$. 
A necessary condition for $\Phi$ to be in $\PF$ is for $NC(K)$ to contain a self-dual cone within. 
\end{theorem}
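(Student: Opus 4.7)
The plan is to combine the duality results of Section~2 with the Barker--Foran theorem (Theorem~\ref{barker}). By Proposition~\ref{makespos} together with Lemma~\ref{lemma:real kraus}, the assumption $\Phi \in \PF$ furnishes a tuple $A = (A_1, \ldots, A_d) \subseteq \mathcal{A}^d$ of $\tau$-orthonormal Hermitian elements (with real Kraus operators $K_i$) such that $W(A) \subseteq NC(K)$. Since $NC(K)$ is a closed convex cone, the closed convex conic hull of $W(A)$ also lies in $NC(K)$. By Proposition~\ref{Prop:W(A)^*=D(A)}, $D(A) = W(A)^{*}$, so $D(A)^{*} = W(A)^{**}$ is exactly this closed convex conic hull; in particular $D(A)^{*} \subseteq NC(K)$.

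The crux of the argument is to verify the self-pairing inclusion $D(A) \subseteq D(A)^{*}$. Given $w, w' \in D(A)$, set $B = \sum_{i} w_i A_i$ and $B' = \sum_{j} w'_j A_j$; both are positive elements of $\mathcal{A}$. Using $\tau$-orthonormality of the $A_i$ and the trace property,
\[
\langle w, w' \rangle \;=\; \sum_{i,j} w_i w'_j\, \tau(A_i A_j) \;=\; \tau(B B') \;=\; \tau\bigl(B^{1/2} B' B^{1/2}\bigr) \;\geq\; 0,
\]
since $B^{1/2} B' B^{1/2}$ is positive in the tracial von Neumann algebra $\mathcal{A}$. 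Hence $D(A) \subseteq D(A)^{*}$.

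Applying Barker--Foran (Theorem~\ref{barker}) to the closed convex cone $D(A)$ produces a self-dual cone $M \subseteq \mathbb{R}^{d}$ with
\[
D(A) \;\subseteq\; M \;=\; M^{*} \;\subseteq\; D(A)^{*}.
\]
Combined with $D(A)^{*} \subseteq NC(K)$ from the first paragraph, this gives $M \subseteq NC(K)$, the required self-dual cone. The main obstacle, I expect, is conceptual rather than computational: one has to spot that the Gram-type identity $\langle w, w'\rangle = \tau(BB')$ (resting on $\tau$-orthonormality and the trace property) is exactly what encodes $D(A) \subseteq D(A)^{*}$, and that Barker--Foran should be applied at the cone $D(A)$ rather than at the closed conic hull of $W(A)$ directly, so that the resulting self-dual cone is automatically trapped inside $NC(K)$ via the already-established inclusion $D(A)^{*} \subseteq NC(K)$.
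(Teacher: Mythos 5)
Your proposal is correct and follows essentially the same route as the paper: both establish $D(A)\subseteq D(A)^{*}$ via the trace Gram identity, deduce $D(A)^{*}\subseteq NC(K)$ by dualizing the inclusion coming from Proposition~\ref{makespos} (the paper states it as $NC(K)^{*}\subseteq D(A)$, you as the bipolar of $W(A)$, which is the same thing), and then invoke Barker--Foran to trap a self-dual cone inside $NC(K)$.
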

\begin{proof}
We have already seen that $\Phi$ is  in $\PF$ if and only if \[NC(K)^* \subseteq D(A)\] for some tuple $A = (A_1,\cdots, A_d)$ where the $A_i$ are trace-orthonormal. 

Now we will show that that $D(A) \subseteq D(A)^*$. Then taking the dual again, we will invoke Theorem \ref{barker}.

To this end, Suppose $y \in D(A)$; that is $A(y) \succeq 0$. We must have that $\tau(A(y)^*P) \geq 0$ for all $P \succeq 0$; in particular, for all $x \in D(A)$, $A(x) \succeq 0$ and so \[\tau(A(y)^*A(x)) \geq 0.\] 
Now it follows that
\begin{align*}  \langle x,y\rangle & =
\sum_{i=1}^d \overline{x_i}y_i\\
&= \sum_{i,j=1}^d \overline{x_i}y_j \tau(A_i^*A_j) \\
&=\tau(A(x)^*A(y))\geq 0.
\end{align*} 
 We thus have that, if $y \in D(A)$, 
$$\langle y,x\rangle \geq 0$$ for all $x \in D(A)$. Hence $y \in D(A)^*$.

Hence, we have $$NC(K)^* \subseteq D(A)\subseteq D(A)^*\subseteq NC(K)$$ where the last inclusion follows from taking the dual again, and using the fact that $NC(K)$ is a closed convex cone. The assertion of the  theorem now follows from invoking Theorem \ref{barker}.
\end{proof}

\section{Factorizable via abelian ancilla}
In this section we characterize PF maps that factor through an abelian algebra. 
\begin{theorem}\label{Thm:abelian ancilla}
For a channel $\Phi$ with Kraus operators $K=(K_1,\cdots, K_d)$ the following statements are equivalent
\begin{enumerate}
\item $\Phi$ is $\PF$ via an abelian algebra. 
\item There are  vectors $\{v_i\}_{i=1}^m \subseteq NC(K)$ such that the vectors satisfy 

$$\sum_{s=1}^m p_s v_sv_s^* = \frac{1}{d}I_d$$ for some probability vector $p = (p_1,\cdots, p_m)$.
\end{enumerate}
\end{theorem}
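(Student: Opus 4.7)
The plan is to prove both implications by identifying abelian finite tracial von Neumann algebras with probability spaces, with condition (2) corresponding exactly to the atomic case. The $\sqrt{d}$ rescaling appearing in both directions is what converts the trace-orthonormality $\tau(A_i^*A_j)=\delta_{ij}$ into the scaled identity $\sum_s p_s v_sv_s^* = \frac{1}{d}I_d$.

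For the direction $(2)\Rightarrow(1)$, I would take $\mathcal{A} = \C^m$ with the faithful trace $\tau(f) = \sum_s p_s f(s)$, and define $A_i \in \mathcal{A}$ by $A_i(s) = \sqrt{d}\,(v_s)_i$. A direct computation using the hypothesis gives
\[ \tau(A_iA_j) = d\sum_s p_s (v_s)_i(v_s)_j = d\left(\sum_s p_s v_sv_s^*\right)_{ij} = \delta_{ij}. \]
The $(p,q)$-block of $Z = \sum_i K_i \otimes A_i$ evaluated at $s\in\{1,\ldots,m\}$ equals $\sqrt{d}\,(K(v_s))_{pq}$, which is nonnegative because $v_s \in NC(K)$; since an element of $\C^m$ is positive iff it is pointwise nonnegative, every block of $Z$ is positive in $\mathcal{A}$, placing $\Phi$ in $\PF$ via this abelian algebra.

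For $(1)\Rightarrow(2)$, the structure theorem for abelian finite tracial von Neumann algebras gives $\mathcal{A} \cong L^\infty(\Omega,\mu)$ for a probability space $(\Omega,\mu)$ with $\tau$ being integration against $\mu$; the Hermitian $A_i$ become real-valued measurable functions. Positivity of $Z(p,q)=\sum_i(K_i)_{pq}A_i$ in $\mathcal{A}$ is equivalent to $\sum_i(K_i)_{pq}A_i(\omega)\geq 0$ for a.e.\ $\omega$, which says precisely that the vector $v(\omega):=(A_1(\omega),\ldots,A_d(\omega))$ lies in $NC(K)$ almost surely, while the trace-orthonormality reads $\int v v^*\,d\mu = I_d$. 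Rescaling $\tilde v := v/\sqrt{d}$ (still in $NC(K)$, since $NC(K)$ is a cone) yields
\[ \tfrac{1}{d}\,I_d = \int \tilde v(\omega)\,\tilde v(\omega)^*\,d\mu(\omega). \]

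To finitize, I would normalize onto the unit sphere: set $r(\omega):=\|\tilde v(\omega)\|^2$ and $\hat v(\omega):=\tilde v(\omega)/\|\tilde v(\omega)\|$ wherever $\tilde v(\omega)\neq 0$. Taking the trace of the displayed identity gives $\int r\,d\mu = 1$, so $d\nu := r\,d\mu$ is a probability measure with $\frac{1}{d}I_d = \int \hat v\hat v^*\,d\nu$. Since $NC(K)$ is a closed cone, the set $S := \{\hat v\hat v^* : \hat v \in NC(K),\,\|\hat v\|=1\}$ is compact in $\mathrm{Sym}_d(\R)$, and in this finite-dimensional ambient space $\mathrm{conv}(S)$ is itself compact (hence closed). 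The integral representation therefore places $\frac{1}{d}I_d$ inside $\mathrm{conv}(S)$, and Caratheodory's theorem produces the required finite expansion $\frac{1}{d}I_d = \sum_{s=1}^{m}p_s v_sv_s^*$ with $v_s\in NC(K)$ and $(p_s)$ a probability vector. The main obstacle is precisely this finitization step; the remaining work is the construction of the $(A_i)$ and straightforward $\sqrt{d}$-bookkeeping to align the two normalizations.
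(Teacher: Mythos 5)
Your proof is correct. The $(2)\Rightarrow(1)$ direction is essentially the paper's own construction: you realize the abelian algebra as $\C^m$ with the $p$-weighted trace and take $A_i$ diagonal with values $\sqrt{d}\,(v_s)_i$, exactly as in the paper (you make the $\sqrt{d}$ normalization explicit where the paper only says ``up to a scaling,'' which is in fact needed for $Z$ to reproduce $\Phi$ rather than $\frac{1}{d}\Phi$). The $(1)\Rightarrow(2)$ direction is where you genuinely diverge: the paper tacitly treats the abelian ancilla as finite-dimensional, writing $A_i=\mathrm{diag}(a_i)$ on $m$ atoms with weights $p_j=\tau(E_{jj})$ and extracting the vectors $v_j$ by evaluating the joint numerical range at the basis vectors $e_j$, whereas you invoke the structure theorem $\mathcal{A}\cong L^\infty(\Omega,\mu)$, read off that positivity of the blocks means $v(\omega)=(A_1(\omega),\cdots,A_d(\omega))\in NC(K)$ almost everywhere while trace-orthonormality means $\int vv^*\,d\mu=I_d$, and then finitize by the barycenter-plus-Carath\'eodory argument on the compact set $\{\hat v\hat v^* : \hat v\in NC(K),\ \|\hat v\|=1\}$. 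This buys real generality: it covers non-atomic or infinitely-atomic abelian ancillas, to which the paper's argument as written does not literally apply, and it explains why finitely many $v_s$ always suffice; the cost is the (routine) compactness and Carath\'eodory step, which you handle correctly since $NC(K)$ is closed and the convex hull of a compact set in a finite-dimensional space is compact. One small point you assert rather than prove is that the $A_i$ may be taken Hermitian, i.e.\ real-valued in $L^\infty(\Omega,\mu)$: this does follow, but from the paper's earlier discussion of $NC(K)$ --- with the $K_i$ real and linearly independent, entrywise nonnegativity of $\sum_i (K_i)_{pq}A_i(\omega)$ for all $(p,q)$ forces the imaginary parts of the $A_i(\omega)$ to satisfy $\sum_i \mathrm{Im}(A_i(\omega))K_i=0$ and hence vanish for a.e.\ $\omega$. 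Since the paper's proof relies on the same implicit fact, this is a one-line patch, not a gap.
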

\begin{proof}
\textbf{1 $\Longrightarrow$ 2.} Suppose $\Phi$ is $\PF$ with $\mathcal{A}$ an abelian algebra; i.e., $A_i$ are mutually diagonalizable. Suppose $A_i = \mathrm{diag}(a_i)$, and $\tau(E_{ii}) = p_i$ where $(p_1,\cdots, p_m)$ is a probability vector. Define the inner product $\langle \cdot, \cdot \rangle_p$ by 
$$\langle v,w\rangle_p := \sum_{i=1}^m p_i\overline{v_i}w_i.$$

It is easily seen that $W(A) = \{(\langle q,a_1\rangle_p, \cdots, \langle q, a_d\rangle_p) : q_i \geq 0 \ \& \ \langle q,q\rangle_p = 1\}$ since $x^*A_ix = \sum_{j=1}^m p_j |x_j|^2 a_{ij}$; if $q = (|x_1|^2, \cdots, |x_m|^2)$ then this is $\langle q,a_i\rangle_p$. 

Thus, it must be that 

\begin{align*}\sum_{i=1}^d \langle q,a_i\rangle_p K_i &= \sum_{i=1}^d \sum_{j=1}^m p_j q_j a_{ij}K_i \\
& = \sum_{j=1}^m p_j q_j \sum_{i=1}^d a_{ij}K_i\\
& \geq 0\end{align*}

for some $p_j >0$ and all $q_j \geq 0$. In particular, we can choose $ x= e_s$, normalized so that $\langle x,x\rangle_p = 1$; then $q_j = 0$ for all $j$ except $s$, and we get 

$$\sum_{i=1}^d a_{ij} K_i \geq 0$$ for each $j$. If $v_j = \sum_{i=1}^m a_{ij}e_i$, then $K(v_j) \geq 0$ for each $j$, i.e., $v_j \in NC(K)$. 

Finally, $\tau(A_i^*A_j) = \delta_{ij} = \langle a_i,a_j\rangle_p$; this is 
$$d\sum_{s=1}^m p_s \overline{a_{is}}a_{js} = d\sum_{s=1}^m p_s \overline{v_{si}}v_{sj}$$ which is the $(i,j)$ entry of $d\sum_{i=1}^m p_s v_sv_s^*$; so we have that 
$$d\sum_{s=1}^m p_s v_sv_s^* = I_d.$$

\textbf{2 $\Longrightarrow$ 1.}
Suppose now that $NC(K)$ contains vectors $\{v_i\}\subseteq \R^d$ such that $\sum_i p_iv_iv_i^*=\frac{1}{d}I_d$ for some probability vector $(p_1,\cdots,p_m)$. Then let 
$A_i=\sum_{j=1}^m v_{ji}E_{jj}$- a $m\times m$ diagonal matrix with $i^{th}$ entries of each $v_j$ as its entries. Let $\mathcal{A}$ be the (abelian) von-Neumanna algebra generated by $A_i$ with trace $\tau(E_{jj})=p_j$. Then
\[\tau(A_i^*A_j)=\sum_k p_k\overline{v_{ki}}v_{kj}=(\sum_k p_kv_kv_k^*)_{ij}.\] 
Clearly from the condition $\sum_i p_iv_iv_i^*=\frac{1}{d}1_d$, the $(i,j)^{th}$ entry is $\frac{1}{d}\delta_{ij}$. Thus up to a scaling $A_i$'s are trace orthonormal.

Now form $Z=\sum_{i=1}^d K_i\otimes A_i$. Since the $A_i$s are diagonal, $Z$ is a block matrix each of which is a diagonal matrix with the $E_{ij}\otimes E_{kk}$ entry being given by 
\[\sum_{s=1}^d (K_s)_{ij}(A_s)_{kk}=\sum_{s=1}^d (K_s)_{ij}v_{ks}=\sum_{s=1}^d (v_{ks}K_s)_{ij}.\]
This is the $(i,j)^{th}$ entry of $\sum_{s=1}^d v_{ks}K_s$. As $v_k\in NC(K)$, by definition, this matrix is an entrywise positive matrix. Hence the $(k,k)$ entry of $Z(i,j)$, the $(i,j)^{th}$ block of $Z$, is positive. Since $Z(i,j)$ is a diagonal matrix, all of whose diagonal entries are positive, it is positive semidefinite. So all the entries of $Z$ are positive semidefinite matrices. Hence $Z$ is entrywise positive. 
\end{proof}

\subsection*{CP/CPSD cones and PF maps}
At this juncture we introduce a few notions of symmetric matrices. A symmetric $n\times n$ matrix $X$ is called \textit{completely positive} (CP) if there exist nonnegative vectors $\{p_i\}_{i=1}^n\in \R_{+}^k$, for some $k\geq 1$, such that $X=(X_{i,j})=(\langle p_i,p_j\rangle)$, for all $1\leq i,j\leq n$. The set of $n\times n$ completely positive matrices, denoted by $\mathcal{C}\mathcal{P}^n$,  forms a pointed, full-dimensional closed convex cone which has been studied extensively in the literature (see \cite{cp-book} and references therein).
Next, a symmetric $n\times n$ matrix $X$ is said to be \textit{completely positive semidefinite} (CPSD) if there exist positive semidefinite matrices $P_1,\cdots, P_n\in M_k(\C)$, for some $k\geq 1$, such that $X=(Tr(P_iP_j))$. The set of all such matrices, denoted by  $\mathcal{C}\mathcal{S}_{+}^n$, is a convex set. This cone has been introduced to establish linear conic formulations for various quantum graph parameters (\cite{Laurent1}, \cite{roberson}). If we denote $\mathcal{D}\mathcal{N}\mathcal{N}^n$ to be the set of all $n\times n$ positive semidefinite and entrywise nonnegative (doubly nonnegative), then it is known that
\[\mathcal{C}\mathcal{P}^n\subseteq \mathcal{C}\mathcal{S}_{+}^n\subseteq \mathcal{D}\mathcal{N}\mathcal{N}^n.\] 
It is known that $\mathcal{C}\mathcal{P}^n= \mathcal{D}\mathcal{N}\mathcal{N}^n$
 for $n\leq 4$ and strict inclusion holds for $n\geq 5$ (\cite{minc}, \cite{diananda}). Frankel and Weiner (\cite{weiner}) gave an example of a $5\times 5$ matrix which is doubly nonnegative but not CPSD and in \cite{fawzi} it was shown that there exists a $5\times 5$ matrix which is CPSD but not CP. 
It was shown in \cite{Laurent-2} that any matrix $X$ lying in the closure of $\mathcal{C}\mathcal{S}_{+}^n$ admits a gram representation by positive elements $A_1,\cdots, A_n$ in some tracial von-Neumann algebra $(\mathcal{A},\tau)$. That is $X=(\tau(A_iA_j))$. 

From the proof of Lemma \ref{lemma:real kraus} it is evident that a quantum channel $\Phi\in\PF$ iff the Choi matrix $J(\Phi)$ lies inside the closure of $\mathcal{C}\mathcal{S}_{+}^{n^2}$. In this subsection we characterize positively factorizable maps on $M_n(\C)$ whose Choi matrix lie inside the set $\mathcal{C}\mathcal{P}^{n^2}$. 

\begin{theorem}\label{Prop:nonneg Kraus}
For a channel $\Phi$ with Kraus operators $K=(K_1,\cdots, K_d)$, the following statements are equivalent
\begin{enumerate}
\item $\Phi\in \PF$ via an abelian algebra.
\item One can choose a set of Kraus operators $\{L_i\}$ for $\Phi$ such that every $L_i$ is nonnegative.
\item The Choi matrix of $\Phi$, $J(\Phi)$, is a CP matrix.
\end{enumerate}
\end{theorem}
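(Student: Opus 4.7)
The plan is to prove the equivalence by establishing $(1)\Leftrightarrow(2)$ through the previous Theorem \ref{Thm:abelian ancilla}, and $(2)\Leftrightarrow(3)$ through the standard correspondence between Kraus representations of $\Phi$ and positive semidefinite decompositions of its Choi matrix. Throughout I may assume the Kraus operators are real by Lemma \ref{lemma:real kraus}.

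For $(1)\Rightarrow(2)$, I would apply Theorem \ref{Thm:abelian ancilla} to extract vectors $v_1,\ldots,v_m\in NC(K)$ and a probability vector $(p_1,\ldots,p_m)$ satisfying $\sum_{s=1}^m p_sv_sv_s^*=\frac{1}{d}I_d$, and then define $L_s:=\sqrt{dp_s}\,K(v_s)=\sqrt{dp_s}\sum_i v_{si}K_i$. Each $L_s$ is entrywise nonnegative because $v_s\in NC(K)$, and a direct expansion of $\sum_s L_sXL_s^*$ reduces to $\sum_{i,j}\bigl(d\sum_s p_sv_{si}v_{sj}\bigr)K_iXK_j^*$, which collapses to $\Phi(X)=\sum_i K_iXK_i^*$ by the identity $d\sum_s p_sv_{si}v_{sj}=\delta_{ij}$ read off the components of $\sum_s p_sv_sv_s^*=\frac{1}{d}I_d$. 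For the converse $(2)\Rightarrow(1)$, given entrywise nonnegative Kraus operators $L=(L_1,\ldots,L_d)$, the standard basis vectors $e_i\in\R^d$ trivially lie in $NC(L)$ (since $L(e_i)=L_i\geq 0$), and with uniform weights $p_i=1/d$ one has $\sum_ip_ie_ie_i^*=\frac{1}{d}I_d$; invoking the implication $(2)\Rightarrow(1)$ of Theorem \ref{Thm:abelian ancilla} applied to $L$ then places $\Phi$ in $\PF$ via an abelian algebra.

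For $(2)\Leftrightarrow(3)$, I would use the identity $J(\Phi)=\sum_i \mathrm{vec}(L_i)\mathrm{vec}(L_i)^T$, valid for any real Kraus family $\{L_i\}$. If the $L_i$ are entrywise nonnegative then so are the $\mathrm{vec}(L_i)$, giving a Gram representation of $J(\Phi)$ by nonnegative vectors, so $J(\Phi)\in\mathcal{C}\mathcal{P}^{n^2}$. Conversely, given a CP decomposition $J(\Phi)=\sum_j u_ju_j^T$ with $u_j\in\R^{n^2}$ entrywise nonnegative, I unvectorize each $u_j$ to obtain entrywise nonnegative $n\times n$ matrices $M_j$; since any positive semidefinite decomposition of $J(\Phi)$ arises from a bona fide Kraus family, $\{M_j\}$ is a valid Kraus representation of $\Phi$.

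The genuinely delicate step will be the direction $(3)\Rightarrow(2)$, where one must justify that unvectorizing a CP decomposition of $J(\Phi)$ produces valid Kraus operators of the same channel; this leans on the non-uniqueness of Kraus representations and the standard bijection between PSD factorizations of a Choi matrix and Kraus families. By comparison, the verification of the completeness relation in $(1)\Rightarrow(2)$ and the algebra of $(2)\Leftrightarrow(3)$ are essentially routine once the right vectors $v_s$ and $u_j$ are on the table.
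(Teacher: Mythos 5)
Your proposal is correct and takes essentially the same approach as the paper: $(1)\Rightarrow(2)$ via Theorem \ref{Thm:abelian ancilla} and $(2)\Rightarrow(3)$ via $J(\Phi)=\sum_i \mathrm{vec}(L_i)\mathrm{vec}(L_i)^*$ are identical, and your split $(3)\Rightarrow(2)\Rightarrow(1)$ uses the same ingredients as the paper's direct $(3)\Rightarrow(1)$ (unvectorize a CP decomposition into nonnegative Kraus operators, then factor through a diagonal, hence abelian, algebra). As a side remark, your normalization $L_s=\sqrt{dp_s}\,K(v_s)$ correctly accounts for the factor $\tfrac{1}{d}$ coming from $\sum_s p_s v_s v_s^*=\tfrac{1}{d}I_d$, which the paper's displayed computation glosses over.
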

\begin{proof}
\textbf{1 $\Longrightarrow$ 2.}
 Using  Theorem \ref{Thm:abelian ancilla} we know that if $\Phi$ is in $\PF$ via an abelian algebra, then there are vectors $\{v_i\}$ and probabilities $\{p_i\}$ such that $\sum_i p_iv_iv_i^*=\frac{1}{d}I_d$ and $\sum_{j}v_{ij}K_j$ is nonnegative. Now define 
$L_i=\sum_{j}v_{ij}K_j$ and we check for any $X$, 
\[\sum_i p_iL_iXL_i^*=\sum_{i,j,k}p_iv_{ij}\bar{v_{ik}}
K_jXK_k^*=\sum_{j,k}K_jXK_k^*(\sum_{i}p_iv_{ij}
\bar{v_{ik}}).\]
Now note that from the equation $\sum_i p_iv_iv_i^*=\frac{1}{d}I_d$, the second sum
is $\delta_{jk}$. So we get
\[\sum_i p_iL_iXL_i^*=\sum_{j=1}^d K_jXK_j^*=\Phi(X).\]
Hence $\{\sqrt{p_i}L_i\}$ is a set of nonnegative Kraus operators for $\Phi$.

\textbf{2 $\Longrightarrow$ 3.}
If we can choose a set of non negative Kraus operators $\{K_i\}$ for $\Phi$, then the Choi matrix satisfies the relation $J(\Phi)=\sum_i vec(K_i)vec(K_i)^*$. As $vec(K_i)$ is a vector with nonnegative entries, by definition $J(\Phi)$ is a CP matrix.  

\textbf{3 $\Longrightarrow$ 1.} If $J(\Phi)$ is a CP matrix, then from the relation $$J(\Phi)=\sum_i vec(K_i)vec(K_i)^*,$$ one can choose a set of nonnegative Kraus operators for $\Phi$. Then any choice of $d$ orthogonal projections $\{A_i\}_{i=1}^d$ on a Hilbert space, would result in an operator $Z=\sum_{i=1}^d K_i\otimes A_i$, whose $(i,j)^{th}$ block is a positive linear combination of $A_i$ with the entries of $K_i$. As these entries are all positive numbers, we get that $Z(i,j)$ is a positive operator for any $(i,j)$ which means $\Phi\in \PF$. The fact that $\Phi$ factors through an abelian algebra follows from the fact that the algebra $\mathcal{A}$ generated by $\{A_i\}$ is abelian as the $A_i$ are orthogonal.
\end{proof}

From the result above the following corollary is immediate. One can compare the result of this corollary with the Theorem 4.2 in \cite{PR} where the map associated to local/classical correlations turns out to be a mixed permutation map. 
\begin{corollary}
Any quantum channel on $M_n(\C)$ with the permutation matrices as Kraus operators is in $\PF$ and the factorizing algebra can be taken to be abelian.
\end{corollary}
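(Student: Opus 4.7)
The plan is to invoke Theorem~\ref{Prop:nonneg Kraus} directly. The key observation is that every permutation matrix is a $0$--$1$ matrix and is therefore entrywise nonnegative. Thus, if $\Phi$ is a quantum channel whose Kraus operators $\{K_i\}$ are (scaled) permutation matrices, then condition~(2) of Theorem~\ref{Prop:nonneg Kraus}, namely the existence of a set of entrywise nonnegative Kraus operators representing $\Phi$, is trivially satisfied. By the equivalence $(2)\Longleftrightarrow(1)$ of that theorem, $\Phi$ lies in $\PF$ and factors through an abelian algebra, which is exactly the claim.

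First I would spell out what ``permutation matrices as Kraus operators'' means in the trace-preserving setting: for each permutation $P_i$ we have $P_i^*P_i=I$, so the normalization $\sum_i K_i^*K_i=I$ forces $K_i = \sqrt{p_i}\,P_i$ for some probability vector $(p_i)$. Then I would note that positive scaling preserves entrywise nonnegativity, so each $K_i$ is still a nonnegative matrix. After that the proof is a one-line appeal to Theorem~\ref{Prop:nonneg Kraus}.

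If one wants a more self-contained argument, the factorization can be exhibited explicitly by following the recipe from $(3)\Rightarrow(1)$ of Theorem~\ref{Prop:nonneg Kraus}: take $\{A_i\}_{i=1}^d$ to be any mutually orthogonal projections in an abelian von Neumann algebra $\mathcal{A}$ (for instance, the diagonal matrix units $E_{ii}$ in $M_d(\C)$ endowed with an appropriate faithful trace making the $A_i$ trace-orthonormal) and set $Z = \sum_i K_i \otimes A_i$. Each block $Z(p,q) = \sum_i (K_i)_{pq} A_i$ is a nonnegative linear combination of positive elements and hence positive in $\mathcal{A}$, so $Z$ implements the required positive factorization.

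There is essentially no obstacle to overcome here; the corollary is a direct specialization of Theorem~\ref{Prop:nonneg Kraus} once one recognizes permutation matrices as the prototypical nonnegative Kraus operators. The only point requiring any care is the implicit scaling in the phrase ``the permutation matrices as Kraus operators,'' which is handled by the preliminary normalization step above.
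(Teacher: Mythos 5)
Your proof is correct and matches the paper's reasoning: the paper derives this corollary as an immediate consequence of Theorem~\ref{Prop:nonneg Kraus}, since (suitably scaled) permutation matrices are entrywise nonnegative Kraus operators. Your extra care about the normalization $K_i=\sqrt{p_i}\,P_i$ and the explicit abelian factorization are fine but not needed beyond the one-line appeal to that theorem.
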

Note that the local/classical correlations in synchronous and bisynchronous games arise from abelian 
C$^*$-algebras \cite{hmkps}.
 
\begin{example}\label{examp:onb}
 The cone $S = \mathrm{cone}\{v_1,\cdots, v_d\}$ where the $v_i$'s form an o.n. basis for $\C^d$ is self-dual; it is, up to a unitary transformation, just the positive orthant of vectors whose entries are nonnegative. If $S \subseteq NC(K)$ for some $\Phi$ with Kraus operators $K = (K_1,\cdots, K_d)$, then $\Phi$ is in $\PF$; indeed $\Phi$ is in $\PF$ by means of an abelian algebra, as $v_i$ are an orthonormal basis each vector of which is in $NC(K)$, and thus they satisfy $\frac{1}{d}\sum_{i=1}^d v_iv_i^* = \frac{1}{d}I_d$.
\end{example}

\begin{remark}
It is worth noting the similarities between the conditions $S\subseteq NC(K)$ for $S = S^*$, the necessary condition for $\Phi$ to be in $\PF$, and $\{v_i\} \subseteq NC(K)$ for $\sum_i p_i v_iv_i^* = I_d$, the sufficient condition for $\Phi$ to be in $\PF$ by means of an abelian algebra. 

Note that both conditions require that $NC(K)$ be full-dimensional: in the first case, if there is a subspace $V \subsetneq \C^d$ such that $NC(K) \subseteq V$, then $V^{\perp}= V^* \subseteq NC(K)^* \subseteq NC(K) \subseteq V$, and since $V$ is not the full space, we can find a non-zero vector $v \in V^{\perp} \subseteq V$, which must satisfy $\langle v,v\rangle = 0$, a contradiction.
In the second case, if $\sum_i p_i v_iv_i^* = \frac{1}{d}I_d$, then for any vector $x \in \C^d$, we have that 

$$x = I_d x = d\sum_i p_i \langle v_i,x\rangle v_i$$ and so $x \in \mathrm{span}\{v_i\}$ for any $x \in \C^d$. 

What's more, as we saw in the previous example, there is a family of self-dual cones $S$, those generated by an orthonormal basis, such that $S \subseteq NC(K)$ is a sufficient condition for $\Phi$ to be in $\PF$. Indeed, if $d = 2$, the two conditions coincide; from \cite{barker1976self} we have that every self-dual cone in two dimensions is a cone generated by an orthonormal basis. Note that the analogous result fails for $d\geq 3$. 
\end{remark} 
\subsection{Examples and non-examples}
Here we note down some examples and non-examples of these maps.
\begin{example}
Consider the Werner-Holevo channel $\Phi:M_3(\mathbb{C})\rightarrow M_3(\mathbb{C})$ defined by \[\Phi(X)=\frac{1}{2}(
Tr(X)1-X^t),\] where $X^t$ denotes the transpose of $X$. 
One can check that a set of Kraus operators for $\Phi$ are given by the following three matrices:
\[
K_1=\begin{bmatrix}
0 & 0 & 0\\
0 & 0 & \frac{1}{\sqrt{2}}\\
0 & \frac{-1}{\sqrt{2}} & 0
\end{bmatrix}, \quad
K_2=\begin{bmatrix}
0 & 0 & \frac{-1}{\sqrt{2}}\\
0 & 0 & 0\\
\frac{1}{\sqrt{2}} & 0 & 0
\end{bmatrix}, \quad
K_3=\begin{bmatrix}
0 & \frac{1}{\sqrt{2}} & 0\\
\frac{-1}{\sqrt{2}} & 0 & 0\\
0 & 0 & 0
\end{bmatrix}.
\]
Now it follows that $NC(K)$ contains no self-dual cone. In fact $NC(K)=\{0\}$. If not, then assume there is a vector $y=(y_1, y_2, y_3)\in NC(K)$, then from the condition $\sum_i y_iK_i\geq 0$, we obtain 
\[\frac{1}{\sqrt{2}}\begin{bmatrix}
0 & {y_3} & {-y_2}\\
{-y_3} & 0 & {y_1}\\
{y_2} & {-y_1} & 0
\end{bmatrix}
\geq 0\]
Clearly the above matrix can not be entrywise nonnegative for any $(y_1,y_2,y_3)$ unless we  have $y_i=0$, for all $i$ and hence $y=0$. So this map is not PF.
\end{example}
\begin{example}
Consider the completely depolarizing channel on $M_n(\C)$
\[\Omega(x)=Tr(X)\frac{1}{n}.\]
One representation of this map is with the standard matrix units $E_{i,j}$. Indeed, one checks that $\Omega(x)=\frac{1}{n}\sum_{i,j=1}^n E_{i,j}XE_{i,j}^*$.
As the Kraus operators are nonnegative, by the Proposition \ref{Prop:nonneg Kraus} this map is in $\PF$.
\end{example}
More examples concerning Schur maps are given later in the paper.

\section{Closed under compositions and convex combinations}
Here we show that the $\PF$ maps are closed under compositions.
\begin{theorem}
If $\Phi,\Psi$ are in $\PF$ maps through von Neumann algebras $\mathcal{A}$ and $\mathcal{B}$, then $\Psi\circ\Phi$ is in $\PF$ map through $\mathcal{A}\otimes \mathcal{B}$.
\end{theorem}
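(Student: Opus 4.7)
The plan is to construct the factorizing operator for $\Psi\circ\Phi$ by tensoring together the factorizations of $\Phi$ and $\Psi$. Using condition (2) of Definition \ref{def:PF}, I write $\Phi$ via $Z_{\mathcal{A}}=\sum_{i=1}^d K_i\otimes A_i$ with Kraus operators $\{K_i\}$, trace-orthonormal $A_i\in\mathcal{A}$, and each block of $Z_{\mathcal{A}}$ positive in $\mathcal{A}$, and similarly $\Psi$ via $Z_{\mathcal{B}}=\sum_{j=1}^e L_j\otimes B_j$ with $B_j\in\mathcal{B}$. Since a Kraus representation of the composition is $\{L_jK_i\}$, the natural candidate is
\[
Z \;=\; \sum_{i,j} L_jK_i\,\otimes\,(A_i\otimes B_j)\;\in\; M_n(\mathcal{A}\otimes\mathcal{B}),
\]
where $\mathcal{A}\otimes\mathcal{B}$ is the von-Neumann tensor product carrying $\tau_{\mathcal{A}}\otimes\tau_{\mathcal{B}}$. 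Conceptually $Z$ is the matrix product, in $M_n(\mathcal{A}\otimes\mathcal{B})$, of the embedded operators $1_{\mathcal{A}}\otimes Z_{\mathcal{B}}$ and $Z_{\mathcal{A}}\otimes 1_{\mathcal{B}}$, and this picture guides the rest of the proof.

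I would then verify three things in order. First, the ancilla family $\{A_i\otimes B_j\}$ is trace-orthonormal under $\tau_{\mathcal{A}}\otimes\tau_{\mathcal{B}}$, which is immediate from multiplicativity of the trace on elementary tensors. Second, by expanding $(L_jK_i)_{pq}=\sum_r (L_j)_{pr}(K_i)_{rq}$ and regrouping the double sum, the $(p,q)$ block of $Z$ collapses to
\[
Z(p,q) \;=\; \sum_{r=1}^n Z_{\mathcal{A}}(r,q)\otimes Z_{\mathcal{B}}(p,r).
\]
Third, a direct computation of $(\mathrm{id}\otimes\tau_{\mathcal{A}}\otimes\tau_{\mathcal{B}})(Z(X\otimes 1)Z^*)$ using the orthonormality from step~1 reduces the four-fold sum to $\sum_{i,j}L_jK_iXK_i^*L_j^*=\Psi(\Phi(X))$, yielding the required factorization.

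The main obstacle is the block-positivity step; the classical Kraus-level proof of factorizability in the unitary setting makes no use of any entrywise positivity, so the new content is recognizing that the regrouping in step~2 expresses each block as a sum of elementary tensors of elements \emph{already known to be positive} in $\mathcal{A}$ and $\mathcal{B}$ respectively. Since tensor products of positive elements are positive in the von-Neumann tensor product and sums of positives remain positive, each block $Z(p,q)$ is positive in $\mathcal{A}\otimes\mathcal{B}$, and the proof concludes.
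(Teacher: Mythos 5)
Your proposal is correct and takes essentially the same route as the paper: your operator $Z=\sum_{i,j}L_jK_i\otimes(A_i\otimes B_j)$ is exactly the product $\tilde{W}\tilde{Z}$ of the embedded factorizing operators used there, and your block formula $Z(p,q)=\sum_r Z_{\mathcal{A}}(r,q)\otimes Z_{\mathcal{B}}(p,r)$ is the same expression the paper uses to conclude block positivity from positivity of tensor products of positive elements. The trace-orthonormality of $\{A_i\otimes B_j\}$ and the final four-fold-sum computation likewise mirror the paper's verification, so there is no gap.
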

\begin{proof}
Following the definition, if $\Phi,\Psi$ are in $\PF$, then there exist finite von-Neumann algebras $\mathcal{A},\mathcal{B}$ with traces $\tau_\mathcal{A},\tau_\mathcal{B}$ with operators $Z,W$ in $M_n\otimes \mathcal{A}$ and $M_n\otimes \mathcal{B}$ (respectively) such that $\forall X\in M_n$ we have

\[\Phi(X)=id\otimes \tau_\mathcal{A}(Z(X\otimes 1_\mathcal{A})Z^*)\ \ \text{and} \ \Psi(X)=id\otimes \tau_\mathcal{B}(W(X\otimes 1_\mathcal{B})W^*).\]
Moreover, if $\Phi(X)=\sum_{i=1}^p K_iXK_i^*$ and $\Psi(X)=\sum_{j=1}^q S_jXS_j^*$ are the Kraus decompositions of these two maps, then there are operators $\{A_1,\cdots, A_p\}\in \mathcal{A}$ and $\{B_1,\cdots,B_q\}\in \mathcal{B}$ such that 
\[Z=\sum_{i=1}^p K_i\otimes A_i \ \text{and} \ W=\sum_{j=1}^q S_j\otimes B_j,\]
with the property that
\begin{equation}\label{delta-func}
\tau_\mathcal{A}(A_iA_j^*)=\delta_{ij} \ 1\leq i,j\leq p, \ \text{and} \ \tau_\mathcal{B}(B_lB_m^*)=\delta_{lm}, 1\leq l,m\leq q.
\end{equation}
Here $\delta_{ij}$ is the Kronecker delta function and also if $Z=(Z(i,j))$ and $W=(W(i,j))$, then $Z(i,j)\succeq 0$ as well as $W(i,j)\succeq 0$, for all $i,j$.

Now we will show that the composition $\Psi\circ\Phi$ factors through the von-Neumann algebra $\mathcal{A}\otimes \mathcal{B}$, which is still finite as both $\mathcal{A},\mathcal{B}$ are finite. To this end, note that 
\[\Psi\circ\Phi(X)=\sum_{l=1}^q \sum_{i=1}^p S_l K_i X K_i^* S_l^*, \ \forall X\in M_n.\] 

Define \[\tilde{Z}=\sum_{i=1}^p K_i\otimes A_i\otimes 1_\mathcal{B}=(\sum_{i=1}^p K_i\otimes A_i)\otimes 1_\mathcal{B}=(Z(i,j)\otimes 1_\mathcal{B})\] and similarly  \[\tilde{W}=\sum_{j=1}^q S_j\otimes 1_\mathcal{A}\otimes B_j=(1_\mathcal{A}\otimes W_{i,j}).\] 
Here we use the isomorphism between $M_n\otimes \mathcal{A}\otimes \mathcal{B}$ and $M_n\otimes \mathcal{B}\otimes \mathcal{A}$.
Now let ${Q}=\tilde{W}\tilde{Z}$. Clearly it is an operator in $M_n\otimes \mathcal{A}\otimes \mathcal{B}$ with the property that the $(i,j)$th entry of ${Q}$ is $Q_{i,j}=\sum_{k}Z(k,j)\otimes W(i,k)$ which is positive.

We compute for any $X\in M_n(\C)$ 
\begin{align*}
& Q(X\otimes 1_\mathcal{A}\otimes 1_\mathcal{B})Q^*\\
&=\tilde{W}(\sum_i K_i\otimes A_i\otimes 1_\mathcal{B})(X\otimes 1_\mathcal{A}\otimes 1_\mathcal{B})(\sum_j K_j^*\otimes A_j^*\otimes 1_\mathcal{B})\tilde{W^*}\\
&=\tilde{W}(\sum_{i,j}K_iXK_j^*\otimes A_iA_j^*\otimes 1_\mathcal{B})\tilde{W}^*\\
&=(\sum_{l}S_l\otimes 1_\mathcal{A}\otimes B_l)(\sum_{i,j}K_iXK_j^*\otimes A_iA_j^*\otimes 1_\mathcal{B})(\sum_m S_m^*\otimes 1_\mathcal{A}\otimes B_m^*)\\
&=\sum_{l,m,i,j}S_l K_i X K_j^* S_m^*\otimes A_iA_j^*\otimes B_lB_m^*. 
\end{align*}
Now we trace out the system $\mathcal{A}\otimes \mathcal{B}$ and get 
\begin{align*}
&id\otimes \tau_\mathcal{A}\otimes \tau_\mathcal{B}(W(X\otimes 1_\mathcal{A}\otimes 1_\mathcal{B})W^*)\\
&=\sum_{l,m,i,j}S_l K_i X K_j^* S_m^*.\tau_\mathcal{A}(A_iA_j^*)\tau_\mathcal{B}(B_lB_m^*).
\end{align*}
Using the Equation \ref{delta-func}, we get 
\[id\otimes \tau_\mathcal{A}\otimes \tau_\mathcal{B}(Q(X\otimes 1_\mathcal{A}\otimes 1_\mathcal{B})Q^*)=\sum_{l,i}S_l K_i X K_i^*S_l^*=\Psi\circ\Phi(x).\]
Hence the result.
\end{proof}
\begin{proposition}
The set of $\PF$ maps are closed under convex combinations.
\end{proposition}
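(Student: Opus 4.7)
The plan is to take two positively factorizable maps $\Phi_1, \Phi_2$ with factorizations through tracial von Neumann algebras $(\mathcal{A}, \tau_\mathcal{A})$ and $(\mathcal{B}, \tau_\mathcal{B})$, and build a joint ancilla via direct sum. Concretely, suppose $\Phi_1(X) = (\mathrm{id}\otimes \tau_\mathcal{A})(Z_1(X \otimes 1_\mathcal{A})Z_1^*)$ and $\Phi_2(X) = (\mathrm{id}\otimes \tau_\mathcal{B})(Z_2(X \otimes 1_\mathcal{B})Z_2^*)$ with $Z_1(i,j) \succeq 0$ in $\mathcal{A}$ and $Z_2(i,j) \succeq 0$ in $\mathcal{B}$ for every $i,j$. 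For $\lambda \in [0,1]$, I want to exhibit a single factorization of $\Phi := \lambda \Phi_1 + (1-\lambda)\Phi_2$.

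First I would handle the endpoints $\lambda \in \{0,1\}$ separately since then $\Phi$ is already $\Phi_1$ or $\Phi_2$. For $\lambda \in (0,1)$, take $\mathcal{C} := \mathcal{A} \oplus \mathcal{B}$, equipped with the normal tracial state
\[
\tau_\mathcal{C}(a \oplus b) := \lambda\, \tau_\mathcal{A}(a) + (1-\lambda)\, \tau_\mathcal{B}(b).
\]
This is a finite von Neumann algebra and $\tau_\mathcal{C}$ is faithful and normal precisely because $\lambda \in (0,1)$ and $\tau_\mathcal{A}, \tau_\mathcal{B}$ are faithful normal traces on their respective factors. Then define $Z \in M_n(\mathcal{C})$ block-by-block by
\[
Z(i,j) := Z_1(i,j) \oplus Z_2(i,j).
\]
Each $Z(i,j)$ is positive in $\mathcal{C}$ since both summands are positive in their respective algebras.

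Next I would verify the factorization identity by a direct computation. The $(i,j)$-entry of $Z(X \otimes 1_\mathcal{C})Z^*$ is $\sum_{k,l} X_{kl}\, Z(i,k) Z(j,l)^*$, and
\[
Z(i,k) Z(j,l)^* = Z_1(i,k)Z_1(j,l)^* \;\oplus\; Z_2(i,k)Z_2(j,l)^*.
\]
Applying $\mathrm{id}\otimes \tau_\mathcal{C}$ and using the definition of $\tau_\mathcal{C}$ splits the resulting sum into $\lambda$ times the $(i,j)$-entry of $\Phi_1(X)$ plus $(1-\lambda)$ times the $(i,j)$-entry of $\Phi_2(X)$, which is exactly $\Phi(X)_{i,j}$. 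Thus $\Phi$ factorizes positively through $(\mathcal{C}, \tau_\mathcal{C})$ via $Z$, so $\Phi \in \PF$.

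There is no substantial obstacle here; the only point that requires any care is ensuring that $(\mathcal{C}, \tau_\mathcal{C})$ is a genuine tracial von Neumann algebra with a faithful normal trace, which is a standard fact about direct sums and convex combinations of such states. The argument extends by induction to any finite convex combination $\sum_k \lambda_k \Phi_k$ by taking $\mathcal{C} := \bigoplus_k \mathcal{A}_k$ with trace $\tau_\mathcal{C}(\oplus_k a_k) = \sum_k \lambda_k \tau_{\mathcal{A}_k}(a_k)$.
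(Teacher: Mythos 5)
Your proof is correct and takes essentially the same route as the paper: the ancilla $\mathcal{A}\oplus\mathcal{B}$ with the weighted trace $\lambda\tau_{\mathcal{A}}\oplus(1-\lambda)\tau_{\mathcal{B}}$ and the block-wise direct sum operator is exactly the paper's construction (there the scaling factors $\sqrt{\lambda}^{\pm 1}$ cancel, so the paper's operator coincides with your $Z$). The only difference is presentational: you verify the factorization identity directly from condition (1) of the definition, whereas the paper phrases it through Kraus operators and trace-orthonormal families as in condition (2), which is slightly more bookkeeping for the same argument.
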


\begin{proof}
Let $\Phi,\Psi$ be two quantum channel which are positively factorizable. Suppose $\Phi,\Psi$ are represented by sets of Kraus operators $\{K_i\}_{i=1}^p$ and  $\{S_i\}_{i=1}^q$ respectively. Let $(\mathcal{A},\ta)$ and $(\mathcal{B},\tb)$ be two tracial von-Neumann algebras through which $\Phi,\Psi$ factors respectively. We will show that any convex combination $\mathcal{E}=\lambda \Phi+(1-\lambda)\Psi$ for $\lambda\in (0,1)$, is positively factorizable by means of the algebra $\mathcal{C}=\mathcal{A}\oplus \mathcal{B}$ with trace $\tau_{\mathcal{C}}=\lambda\ta\oplus(1-\lambda)\tb$.  

To this end, let $\{A_	i\}_{i=1}^p$ and $\{B_i\}_{i=1}^q$ be two sets of operators in $\mathcal{A},\mathcal{B}$ respectively by which the two channels factorize. Let $\{C_i\}_{i=1}^{p+q}$ be given by
$C_i=\big[(\sqrt{\lambda}^{-1}A_i)\oplus 0\big]\in \mathcal{C}$ if $1\leq i\leq p$, and $C_i=\big[ 0\oplus(\sqrt{(1-\lambda)}^{-1} B_i)\big]\in \mathcal{C}$ if $p+1\leq i\leq p+q$. It follows that for $1\leq i\leq p$ and $p+1\leq j\leq p+q$ or vice versa, we have $C_iC_j^*=0$. For any other case,
\[\tau_{\mathcal{C}}(C_iC_j^*)=\lambda\ta(\lambda^{-1}A_iA_j^*)=
\delta_{i,j}\]
or 
\[\tau_{\mathcal{C}}(C_iC_j^*)=(1-\lambda)\tb((1-\lambda)^{-1}
B_{i-p}B_{j-p}^*)=
\delta_{i,j}\]
So the operators $\{C_i\}$ are trace orthonormal. Now note that one set of Kraus operators of $\mathcal{E}$ is given by $\{\sqrt{\lambda}K_i\}_{i=1}^p\cup
\{\sqrt{(1-\lambda)}S_j\}_{j=1}^q$.  It follows that the operator 
\[X=\sum_{i=1}^p \sqrt{\lambda}K_i\otimes C_i+\sum_{i=p+1}^{p+q}\sqrt{(1-\lambda)}S_i
\otimes C_i\]
is the required operator in $M_n\otimes \mathcal{C}$ through which $\mathcal{E}$ factors positively. Indeed, the only thing we need to check is that the entries of $X$ are all positive elements of $\mathcal{C}$. This follows from the fact that the entries of $\sum_{i=1}^p K_i\otimes A_i$ and $\sum_{i=1}^{q} S_i\otimes B_i$ are all positive elements of $\mathcal{A},\mathcal{B}$ respectively. And direct sum of positive elements are positive.
\end{proof}

\section{PF Schur product maps}
An interesting set of examples to consider are the Schur product channels, channels of the form $\Phi(X) = X\circ C$ for a correlation matrix, a PSD matrix $C$ with $1$s down the diagonal. 
In this section we analyse the necessary and sufficient conditions for a Schur map $S_C$, corresponding to a correlation $C=(c_{i,j})$ matrix, to be in $\PF$.
\begin{proposition}\label{prop:Schur-PF}
A Schur map $S_{C}:M_n(\C)\rightarrow M_n(\C)$ is in $\PF$ iff
there exist positive operators $Z_1,Z_2\cdots,Z_n$ in a finite von-Neumann algebra $(\mathcal{A},\tau)$ such that \[C=(c_{i,j})=(\tau(Z_iZ_j)).\]
\end{proposition}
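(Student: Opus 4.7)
The plan is to prove both directions by evaluating the factorization formula on matrix units $E_{k,l}$ and exploiting the fact that the Schur map acts diagonally: $S_C(E_{k,l}) = c_{k,l}E_{k,l}$. The whole proof is essentially one block-matrix computation, read in two directions.

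For the forward implication, suppose $S_C \in \PF$ via $Z = (Z(i,j)) \in M_n(\C)\otimes \mathcal{A}$ with each $Z(i,j)\succeq 0$. Writing $Z = \sum_{i,j}E_{i,j}\otimes Z(i,j)$ and using $E_{i,j}E_{k,l}=\delta_{j,k}E_{i,l}$, I would compute $Z(E_{k,l}\otimes 1)Z^*$ block by block; since each $Z(i,j)$ is positive hence self-adjoint, the resulting expression is $\sum_{i,q} E_{i,q}\otimes Z(i,k)Z(q,l)$. Applying $\mathrm{id}\otimes \tau$ and matching with $c_{k,l}E_{k,l}$ forces $\tau(Z(i,k)Z(q,l)) = c_{k,l}\delta_{i,k}\delta_{q,l}$ for every quadruple $(i,q,k,l)$. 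Specializing to the diagonal case $i=k$, $q=l$ gives $c_{k,l}=\tau(Z(k,k)Z(l,l))$, so the positive operators $Z_i := Z(i,i)$ are the Gram representation sought.

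For the reverse implication, given positive $Z_1,\dots,Z_n$ in a tracial finite von Neumann algebra $(\mathcal{A},\tau)$ with $c_{i,j}=\tau(Z_iZ_j)$, I would exhibit the positively factorizing operator as the block-diagonal element $Z := \sum_{i=1}^n E_{i,i}\otimes Z_i \in M_n(\C)\otimes \mathcal{A}$. Every block of $Z$ is either $0$ or some $Z_i$, so all blocks are positive. The identical computation, now specialized, yields $(\mathrm{id}\otimes \tau)(Z(E_{k,l}\otimes 1)Z^*) = \tau(Z_kZ_l)E_{k,l} = c_{k,l}E_{k,l} = S_C(E_{k,l})$, and linearity extends this to all of $M_n(\C)$. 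Since condition (1) of Definition \ref{def:PF} is verified directly, no trace-orthonormal system $\{A_i\}$ need be built by hand.

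The only real obstacle is bookkeeping: one must carefully track the conjugation in $Z^*$, which has $(i,j)$-block $Z(j,i)^*$, and keep the four indices straight so that the ``diagonal'' constraint $i=k,\ q=l$ can be isolated. Once the matrix-unit arithmetic is done, self-adjointness of each $Z(i,i)$ removes every remaining adjoint and the formula collapses to the clean Gram identity $c_{i,j}=\tau(Z_iZ_j)$, which is exactly the statement of the proposition.
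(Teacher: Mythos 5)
Your proof is correct, and both directions rest on the same matrix-unit computation as the paper's; the only real difference is in the forward direction, where you take a shortcut. The paper first uses $S_C(E_{i,i})=E_{i,i}$ (i.e.\ $c_{i,i}=1$) together with faithfulness of $\tau$ and positivity of the blocks to conclude that every off-diagonal block $Z(p,i)$, $p\neq i$, vanishes, so that $Z=\sum_k E_{k,k}\otimes Z(k,k)$ is block-diagonal, and only then reads off $c_{i,j}=\tau(Z_iZ_j)$. You instead match coefficients in $S_C(E_{k,l})=\sum_{i,q}\tau\bigl(Z(i,k)Z(q,l)\bigr)E_{i,q}=c_{k,l}E_{k,l}$ and extract only the $(k,l)$ coefficient, obtaining $c_{k,l}=\tau\bigl(Z(k,k)Z(l,l)\bigr)$ directly; this is legitimate because the proposition asks only for the existence of a Gram representation by positive operators, not for any structure of $Z$, and it has the minor advantage of not invoking faithfulness of the trace or the normalization $c_{i,i}=1$. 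What the paper's longer route buys is the additional structural fact that any positive factorization of a Schur channel is necessarily block-diagonal (all $Z(i,k)=0$ for $i\neq k$), which is not needed for the statement itself. Your converse is identical to the paper's: $Z=\sum_i E_{i,i}\otimes Z_i$ has positive blocks and $(\mathrm{id}\otimes\tau)\bigl(Z(E_{k,l}\otimes 1)Z^*\bigr)=\tau(Z_kZ_l)E_{k,l}$, which verifies condition (1) of the definition, so no trace-orthonormal family is required.
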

\begin{proof}
Let $S_C \in \PF$. Then there is a von-Neumann algebra $(\mathcal{A},\tau)$ and element $Z=(Z_{i,j})$ such that $z_{i,j}$ are all positive and 
\[S_C(X)=id\otimes \tau(Z(X\otimes 1)Z^*).\]
Now applying this form on matrix units, we get
\[(c_{i,j}E_{i,j})=S_C(E_{i,j})=\sum_{k,l}
\tau(Z_{i,l}Z_{j,k})
E_{k,l}.\]
Now note that $S_C(E_{i,i})=E_{i,i}$. Hence from the above equation we get
\begin{equation}\label{eq-Schur}
E_{i,i}=S_C(E_{i,i})=\sum_{k,l}
\tau(Z_{i,l}Z_{i,k})E_{k,l}.
\end{equation}
Comparing coefficients, we get $k\neq l$, $\tau(Z_{i,l}Z_{i,k})=0$ and $k=l\neq i$ $\tau(Z_{i,k}Z_{i,k})=0$. By the faithfulness of $\tau$, we get $Z_{i,k}=0$ for all $k\neq i$, for all $i$. This means $Z$ is a block diagonal with positive elements $Z_{k,k}$ in the diagonal blocks, that is, $Z=\sum
_{k}E_{k,k}\otimes Z_{k,k}$.

Labeling $Z_k$ as $Z_{k,k}$
now it is clear from the Equation \ref{eq-Schur} that $(c_{i,j})=(\tau(Z_iZ_j))$. As the diagonal entries of $C$ are all 1, we must have $\tau(Z_i^2)=1$.

 Conversely, if there exist positive operators $Z_1,Z_2\cdots,Z_n$ in a finite von-Neumann algebra $(\mathcal{A},\tau)$ such that \[C=(c_{i,j})=(\tau(Z_iZ_j)),\] then define $Z=\sum_j E_{j,j}\otimes z_j$. 
 Then one verifies that 
 \[S_C(X)=id\otimes \tau(Z(X\otimes 1)Z^*).\]
\end{proof}

 If $C = \sum_{i=1}^d \lambda_i v_iv_i^*$, for o.n. eigenvectors $v_i$, then the Kraus operators for $S_C$ are $K_i = \lambda_i \mathrm{diag}(v_i)$. Define $w_i = \sum_{j=1}^d \lambda_j\overline{v_{ji}}e_j$; then $\langle w_i,w_j\rangle = c_{ij}$, and for any $c = (c_1,\cdots, c_d)$, 

$$K(c)_{jj} = \bigl(\sum_{i=1}^d c_i K_i\bigr)_{jj} = \langle w_j,c\rangle.$$

So $K(c) = \mathrm{diag}((\langle w_1,c\rangle, \cdots, \langle w_n,c\rangle))$ and this is positive if and only if $c \in \{w_1,\cdots, w_n\}^*$, the dual of the set of Gram vectors for $C$. That is, $NC(K) = \{w_1,\cdots, w_n\}^*$. Since if $\Phi$ is in $\PF$, then $NC(K) \supseteq W(A)$, we have that if a Schur product channel is positively factorizable , then

$$\{w_1,\cdots, w_n\}^* \subseteq W(A).$$

Thus, a necessary condition for a Schur product channel to be $\PF$ is that $\{w_1,\cdots, w_n\}^{*}$ contains a self-dual cone. 
\section{PF Maps with Choi rank 2}
We can generalize the relationship between Gram vectors and non-negative cones to general CP maps. And as a consequence we prove that for a quantum channel with Choi rank 2, the Choi matrix being nonnegative is a necessary and sufficient condition for the map to be positively factorizable.
\begin{proposition} Let $\Phi$ be a channel with Choi matrix $J(\Phi)$ and Kraus operators $\{K_i\}_{i=1}^d$. Let $\{w_{i,j}\}_{i,j=1}^{n,m}$ be Gram vectors for $J(\Phi)$; then $NC(K)^*$ is the cone generated by $\{w_{i,j}\}_{i,j=1}^{n,m}$. 
\end{proposition}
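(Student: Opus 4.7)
The plan is to exhibit a canonical family of Gram vectors for $J(\Phi)$ built directly from the Kraus operators, recognise the defining inequality for $NC(K)$ as the dual-cone condition against these vectors, and then take the dual one more time.

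First, by Lemma \ref{lemma:real kraus} we may replace the $K_s$ by a real set of Kraus operators. Unwinding the Choi formula,
\[
J(\Phi)_{(i,j),(p,q)} \;=\; \bigl(\Phi(E_{ip})\bigr)_{j,q} \;=\; \sum_{s=1}^d (K_s)_{j,i}\,(K_s)_{q,p},
\]
so the vectors $w_{i,j}\in\mathbb{R}^d$ defined by $(w_{i,j})_s = (K_s)_{j,i}$ form a Gram representation of $J(\Phi)$. Any other Gram family differs from this one by a global orthogonal transformation of $\mathbb{R}^d$, and the statement ``$NC(K)^*$ is the cone generated by the Gram vectors'' is covariant under such a change of basis, so it suffices to prove the result for this canonical family.

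Second, I would unpack the definition of $NC(K)$. For $v=(v_1,\dots,v_d)\in\mathbb{R}^d$ the $(j,i)$ entry of $K(v)=\sum_s v_s K_s$ is precisely $\sum_s v_s(K_s)_{j,i} = \langle v,w_{i,j}\rangle$. Therefore
\[
v \in NC(K) \;\iff\; \langle v,w_{i,j}\rangle \geq 0 \text{ for all } i,j \;\iff\; v \in \bigl(\mathrm{cone}\{w_{i,j}\}\bigr)^{*},
\]
that is, $NC(K) = \bigl(\mathrm{cone}\{w_{i,j}\}\bigr)^{*}$.

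Third, take duals of both sides. Since $\mathrm{cone}\{w_{i,j}\}$ is the conic hull of finitely many real vectors it is already a closed convex cone, so the bipolar theorem applies and gives $\bigl(\mathrm{cone}\{w_{i,j}\}\bigr)^{**} = \mathrm{cone}\{w_{i,j}\}$. Dualising the identity above then yields $NC(K)^{*} = \mathrm{cone}\{w_{i,j}\}$, which is the claim. The only real obstacle is keeping the index conventions straight between $J(\Phi)$, the vectorisations of the $K_s$, and the $w_{i,j}$; once the identification $(w_{i,j})_s = (K_s)_{j,i}$ is in hand, the rest is essentially a tautology.
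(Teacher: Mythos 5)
Your proof is correct and takes essentially the same route as the paper: identify the canonical Gram vectors $(w_{i,j})_s=(K_s)_{j,i}$ obtained by vectorizing the Kraus operators, observe that the entries of $K(v)$ are exactly the inner products $\langle v,w_{i,j}\rangle$ so that $NC(K)=\bigl(\mathrm{cone}\{w_{i,j}\}\bigr)^*$, and then dualize (the paper leaves this last bipolar step implicit, while you justify it via closedness of finitely generated cones). The only soft spot is your ``covariance'' remark --- rotating an arbitrary Gram family rotates the cone it generates but not $NC(K)^*$, so the identity literally holds only for Gram vectors matched to the given Kraus tuple --- but the paper's own proof likewise treats only this canonical family, so this does not affect the substance.
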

\begin{proof} If $k_i = \mathrm{vec}(K_i)$, then we have that $J(\Phi) = \sum_i k_ik_i^*$. Notice that $K(v) = \sum_{i=1}^d v_i K_i \geq 0$ if and only if $\sum_i v_i k_i \geq 0$. 

Also  note that $w_{i,j} =\sum_{q=1}^d k_{q,ij} e_q$ form a set of Gram vectors for $J(\Phi)$, since the $((i,j),(k,l))$ entry of $J(\Phi)$ is $$\sum_{q=1}^d \langle E_{ij} \otimes E_{kl},k_qk_q^*\rangle = \sum_{q=1}^d \overline{k_{q,ik}}k_{q,jl} = \langle w_{ik}, w_{jl}\rangle.$$

Hence, if $K = \sum_{q=1}^d k_q e_q^*$ is the matrix with $k_q$ as its columns, it has $w_{ij}^*$ as its rows; and so $\sum_i v_i k_i = K(v)\geq 0$ if and only if $\langle w_{ij}, v\rangle \geq 0$ for all $(i,j)$. 
\end{proof}

Thus, $NC(K)^*$ is always polyhedral cone.

\begin{theorem} Let $\Phi$ be a completely positive map with Choi-rank $2$, then $\Phi$ is in $\PF$ if and only if $J(\Phi)$ has all nonnegative entries.
\end{theorem}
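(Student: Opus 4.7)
The forward implication is essentially free from the computation inside the proof of Lemma \ref{lemma:real kraus}: if $\Phi \in \PF$ realized by a block-positive $Z$, then every entry of $J(\Phi)$ has the form $\tau(Z(k,i)Z(l,j))$, which is nonnegative because $Z(k,i)$ and $Z(l,j)$ are positive elements of a tracial von Neumann algebra. So only the reverse implication needs work.

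For the reverse direction, assume $J(\Phi)$ has nonnegative entries and $\Phi$ has Choi-rank $2$. The overall strategy is to verify the necessary condition of Theorem \ref{thm-main} and then, because we sit in dimension two, upgrade it to a sufficient one via the remark after Example \ref{examp:onb}. I would first apply Lemma \ref{lemma:real kraus} to replace the Kraus operators by real matrices $K_1,K_2 \in M_n(\mathbb{R})$, so that $NC(K) \subseteq \mathbb{R}^2$ and the Gram vectors $\{w_{i,j}\}$ of $J(\Phi)$ also live in $\mathbb{R}^2$.

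Next, by the proposition immediately preceding this theorem, $NC(K)^*$ is precisely the convex cone generated by the $w_{i,j}$, and the identity $J(\Phi)_{(i,k),(j,l)} = \langle w_{i,k}, w_{j,l}\rangle$ together with the hypothesis that $J(\Phi)$ is entrywise nonnegative tells us every pair of generators of $NC(K)^*$ has nonnegative inner product. Consequently each generator lies in $(NC(K)^*)^* = NC(K)$, giving $NC(K)^* \subseteq NC(K)$.

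The final step is the two-dimensional special feature. In $\mathbb{R}^2$, nonnegative pairwise inner products force a family of vectors to lie in a closed angular sector of width at most $\pi/2$ (since $\langle v,w\rangle \geq 0$ in $\mathbb{R}^2$ bounds the angle between $v$ and $w$ by $\pi/2$). Hence $NC(K)^*$ is contained in such a sector, and by duality $NC(K)$ contains a sector of angular width at least $\pi/2$. Any such sector contains a self-dual quarter-plane, which as observed in the remark after Example \ref{examp:onb} is generated by an orthonormal basis $\{v_1,v_2\}$ of $\mathbb{R}^2$. Since $v_1,v_2 \in NC(K)$ and $\tfrac{1}{2}(v_1 v_1^* + v_2 v_2^*) = \tfrac{1}{2} I_2$, Theorem \ref{Thm:abelian ancilla} delivers $\Phi \in \PF$ via an abelian algebra. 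The delicate point is entirely the dimension-two geometry: in higher Choi-rank the passage from ``contains a self-dual cone'' to ``contains an orthonormal-basis cone'' breaks down, which explains why the clean equivalence is confined to rank $2$.
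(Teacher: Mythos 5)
Your proof is correct, and the sufficiency direction is argued along a genuinely different (and somewhat leaner) route than the paper's. The paper first establishes, via Theorem \ref{thm-main} together with Barker's classification of two-dimensional self-dual cones and Example \ref{examp:onb}, that rank-two PF is equivalent to $NC(K)$ containing a cone generated by two orthogonal vectors, and then proves sufficiency by contrapositive: assuming $\Phi\notin\PF$, it orthogonally normalizes the extremal rays of $NC(K)$ and runs a case analysis ($a,b>0$ versus $b=0$) to exhibit two Gram vectors with negative inner product, i.e.\ a negative Choi entry. You instead go directly: nonnegativity of $J(\Phi)$ means the Gram vectors $w_{ij}$ generating $NC(K)^*$ have pairwise nonnegative inner products, which in $\mathbb{R}^2$ confines them to a closed sector of width at most $\pi/2$; dualizing, $NC(K)$ contains a quarter-plane, i.e.\ a cone on an orthonormal pair $v_1,v_2$, and then Theorem \ref{Thm:abelian ancilla} with $p=(\tfrac12,\tfrac12)$ gives $\Phi\in\PF$ via an abelian algebra. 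This avoids Theorem \ref{thm-main} and Barker's theorem entirely (you only need the Gram-vector description of $NC(K)^*$ and the orthonormal-cone sufficiency), and it yields the slightly stronger conclusion that a rank-two map with nonnegative Choi matrix is PF through an \emph{abelian} algebra; the paper's contrapositive, on the other hand, makes the failure mode explicit by locating the negative entry. Two small points to tidy: (i) Lemma \ref{lemma:real kraus} is stated under the hypothesis $\Phi\in\PF$, whereas in your reverse direction you only know $J(\Phi)\geq 0$ entrywise; this is harmless because the lemma's actual mechanism (a real PSD Choi matrix has real eigenvectors, hence real Kraus operators) applies verbatim under your hypothesis, but you should say so rather than cite the lemma off-label. (ii) The claim that pairwise nonnegative inner products in $\mathbb{R}^2$ force containment in a sector of width at most $\pi/2$ deserves one more line (e.g.\ the cone generated is pointed and polyhedral, so its extreme rays come from the family and their angle is at most $\pi/2$); the parenthetical you give only bounds pairwise angles, not the total spread.
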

\begin{proof}
Following \cite{barker1976self} we know that every two dimensional self-dual cone is isometric with the two dimensional orthant. So from Theorem \ref{thm-main} for $d=2$ and the example \ref{examp:onb},  $\Phi$ is in $\PF$ if and only if $NC(K)$ contains a cone generated by two orthogonal vectors $v_1,v_2 \in \R^2$. From the previous proposition, $NC(K)^*$ is the cone generated by $w_{i,j} = (k_{1,ij}, k_{2,ij})$, where $K_1, K_2$ are the two Kraus operators for $\Phi$. 

That $J(\Phi)$ has all nonnegative entries is a necessary condition is easy to see, so we only prove sufficiency. We do this by proving the contrapositive: if $\Phi$ is not $\PF$, $J(\Phi)$ has a negative entry. 

So, suppose $\Phi$ is not in $\PF$; then $NC(K)$ does not contain a cone generated by orthogonal $v_1,v_2 \subseteq \R^2$. As $NC(K)$ is a cone in $\R^2$, it must have two extremal rays, call them $u_1,u_2$, and the angle between them must be smaller than a right angle. Apply an orthogonal transformation to bring $u_1 \mapsto (1,0)$ and $u_2 \mapsto (a,b)$ where $(a,b) \geq 0$. Define $v_{ij}$ as the image of each $w_{ij}$ under the same transformation.

The cone generated by $v_{ij}$ must be the cone $S=\{(1,0), (a,b)\}^*$, the set of all vectors whose first component is positive, and that lies above the line $ax + by = 0$. We cannot have $a= 0$, $b> 0$, as then $NC(K)$ is an orthogonal transformation of the positive orthant; if $b = 0$, $NC(K)$ is simply a line. So first consider the case $a,b > 0$. 

In this case, the line $ax + by = 0$ is a downward sloping line through the origin, so the cone $S$ contains all of the positive orthant, plus a section of the orthant $\{(x,y): x>0, y<0\}$. As $v_{ij}$ must generate the same cone, there exist $(i,j)$, $(k,l)$ such that $v_{ij} = (0,y)$ with $y>0$ and $v_{kl} = (w,z)$ with $z<0$, and then $$\langle v_{ij}, v_{kl} \rangle = \langle w_{ij}, w_{kl}\rangle < 0$$ and so the $E_{ik}\otimes E_{jl}$ entry of $J(\Phi)$ is negative. 

If instead $b=0$, $NC(K)$ can be orthogonally transformed to the line segment $\{(a,0): a\geq 0\}$ in which case $S = NC(K)^* = \{(w,z): w \geq 0\}$. Once again, $v_{ij}$ must generate the same cone; this cone contains $(0,1)$ and $(0,-1)$, and so there must be $(i,j), (k,l)$ such that $v_{ij} = (0,w)$, $v_{kl} = (0,z)$ with $w > 0$ and $z < 0$, and once again 
$$\langle v_{ij}, v_{kl}\rangle = \langle w_{ij}, w_{kl}\rangle = J(\Phi)_{ik,jl} < 0.$$
\end{proof}

Thus, if the rank of $J(\Phi)$ is two, non-negativity of $J(\Phi)$ is a necessary and sufficient condition for $\Phi$ to be in $\PF$ 
\subsection{Maps with Choi rank greater than 2}
The assertion of the above theorem does not hold for maps with Choi rank greater than 2. Here we provide an example where the Choi matrix is nonnegative but the map is not positively factorizable. 

\noindent Consider the following 5 vectors in $\mathbb{R}^3$:

$v_0=\frac{1}{\sqrt{3}}(1,1,1), v_1=\frac{1}{\sqrt{2}}(0,1,1), v_2=\frac{1}{\sqrt{2}}(-1,0,1), v_3=\frac{1}{\sqrt{2}}(0,-1,1),v_4=\frac{1}{\sqrt{3}}(1,-1,1)$. 

Now consider the matrix $W=
[ \langle v_i,v_j\rangle]_{i,j=0}^4=\begin{bmatrix}
1 & \frac{2}{\sqrt{6}} & 0 & 0 & \frac{1}{3}\\
\frac{2}{\sqrt{6}} & 1 & \frac{1}{2} & 0 & 0\\
0 & \frac{1}{2} & 1 & \frac{1}{2} & 0\\
0 & 0 & \frac{1}{2} & 1 & \frac{2}{\sqrt{6}}\\
\frac{1}{3} & 0 & 0 & \frac{2}{\sqrt{6}} & 1
\end{bmatrix}.$ Then we have the following theorem:
\begin{theorem}\label{thm: no finite vn algebra}
There is no finite von-Neumann algebra $(\mathcal{A},\tau)$ with positive operators $A_0,\cdots,A_4$ in $\mathcal{A}$ such that 
\[W=(\tau(A_iA_j)).\]
\end{theorem}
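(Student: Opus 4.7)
The plan is to derive a contradiction by combining two kinds of constraints on the hypothetical $A_i$'s: linear dependencies in $\mathcal{A}$ forced by $\ker W$, and operator orthogonalities $A_iA_j = 0$ forced by the zero entries of $W$. Since $W$ is the Gram matrix of five vectors in $\mathbb{R}^3$, it has rank $3$, so $\ker W$ is two--dimensional and will supply two linear identities among the $A_i$. The bridge between $\ker W$ and identities in $\mathcal{A}$ is the standard observation that if $c=(c_0,\dots,c_4) \in \ker W$, then the self-adjoint element $B = \sum_i c_i A_i$ satisfies $\tau(B^2) = c^T W c = 0$, and hence $B = 0$ by faithfulness of $\tau$.

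First I would compute a basis for $\ker W$ by finding two independent solutions of $\sum c_i v_i = 0$ and then use these to solve for $A_0$ and $A_4$ in terms of $A_1,A_2,A_3$; a short calculation produces
\[ A_0 = \tfrac{1}{\sqrt 6}(3A_1 - 2A_2 + A_3), \qquad A_4 = \tfrac{1}{\sqrt 6}(A_1 - 2A_2 + 3A_3). \]
Second, since the zero pattern of $W$ is exactly the complement of the pentagon on $\{0,1,2,3,4\}$, and $\tau(A_iA_j) = \|A_i^{1/2}A_j^{1/2}\|_2^2$ for positive $A_i,A_j$, faithfulness of $\tau$ forces $A_iA_j = 0$ for every pentagon non-adjacent pair, namely $(0,2),(0,3),(1,3),(1,4),(2,4)$.

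Substituting the expressions for $A_0$ and $A_4$ into these relations and using $A_1A_3 = 0$ reduces everything to operator identities in $A_1,A_2,A_3$. The relation $A_0A_3 = 0$ collapses to $A_3^2 = 2A_2A_3$; taking adjoints then forces $A_2A_3 = A_3A_2 = \tfrac12 A_3^2$, so $A_2$ and $A_3$ commute. The remaining relations $A_0A_2 = 0$ and $A_4A_2 = 0$ produce two expressions for $A_1A_2$, namely $3A_1A_2 = 2A_2^2 - \tfrac12 A_3^2$ and $A_1A_2 = 2A_2^2 - \tfrac32 A_3^2$. Eliminating $A_1A_2$ between them yields $A_2^2 = A_3^2$, and uniqueness of the positive square root forces $A_2 = A_3$. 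This is fatal: it would give $\tau(A_2A_3) = \tau(A_2^2) = 1$, contradicting the prescribed value $W_{23} = \tfrac12$.

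The main obstacle I anticipate is not the algebra itself but keeping track of noncommutativity: every derived identity must be treated as an operator identity rather than a trace identity, and one must carefully track which pairs commute. In particular, the step deducing $A_2A_3 = A_3A_2$ from $A_3^2 = 2A_2A_3$ by taking adjoints is crucial: it is what allows the two expressions for $A_1A_2$ to be equated as operator identities, so that the final cancellation to $A_2^2 = A_3^2$ goes through.
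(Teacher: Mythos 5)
Your proof is correct and takes essentially the same route as the paper's: both turn the zero entries of $W$ into operator relations $A_iA_j=0$ via positivity and faithfulness of $\tau$, transfer the rank-$3$ linear dependencies among the Gram vectors to the $A_i$, and then use uniqueness of the positive square root to force two of the operators to coincide, contradicting an off-diagonal entry of $W$. The only difference is cosmetic: you use explicit kernel coefficients and conclude $A_2=A_3$ (against $W_{23}=\tfrac12$), whereas the paper expands with abstract coefficients and concludes $A_0=A_1$ (against $W_{01}=\tfrac{2}{\sqrt{6}}$).
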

\begin{proof}
First of all notice that $W_{i,j}=\langle v_i,v_j\rangle\geq 0$ for all $i,j$ and for any 
$i$,
\[\{v_i\}^{\perp}=\text{span}\{\{v_p,v_q\}: p\equiv i+2 \ (mod \ 5), q\equiv i+3 \ (mod \ 5)\}.\]
Now if there exists positive elements $A_0,\cdots, A_4$ in some finite von-Neumann algebra
$(\mathcal{A},\tau)$ with $W_{i,j}=\tau(A_iA_j)$, then certainly we will have for all $i$, $\tau(A_i^2)=1$ and $\tau(A_iA_p)=0=\tau(A_iA_q)$ for $p\equiv i+2 \ (mod \ 5), q\equiv i+3 \ (mod \ 5)$. Note that, as the elements $\{A_i\}$ are positive, it follows that $\tau(A_iA_p)=\tau(A_p^{1/2}A_iA_p^{1/2})=0$. Using the faithfulness of $\tau$, we get $A_p^{1/2}A_iA_p^{1/2}=0$. Which implies $A_p^{1/2}A_i^{1/2}=0$.
Then it follows that $$A_pA_i=A_p^{1/2}A_p^{1/2}A_{i}^{1/2}A_{i}^{1/2}=0.$$ Similarly, $A_iA_p=0$.
Hence we  have $\text{for} \ p\equiv i+2 \ (mod \ 5), q\equiv i+3 \ (mod \ 5),$
\begin{equation}\label{vN-eq1}
A_pA_i=A_iA_p=0=A_iA_q =A_qA_i
\end{equation}
Now since the assignment $v_i\rightarrow A_i$ preserves inner product, it follows that $\text{Span}\{v_0,\cdots,v_4\}=\text{Span}\{A_0,\cdots, A_4\}$ with the Euclidian structure arising from the trace $\tau$. As $\{v_0,v_2,v_3\}$ forms a basis in $\mathbb{R}^3$, we have $v_1\in \text{Span}\{v_0, v_2,v_3\}$. Hence there exists constants $a,b,c$ such that \[A_1=aA_0+bA_2+cA_3.\]   
Multiplying the above equation by $A_0$ from the left and using the orthogonality from the equation \ref{vN-eq1} we get 
\begin{equation}\label{vN-eq-2}
A_0A_1=aA_0^2.
\end{equation}
Similarly note that the set $\{v_1, v_3, v_4\}$ forms a basis in $\mathbb{R}^3$ and hence expressing $v_0$ in this basis we will find  
constants $\alpha,\beta,\gamma$ such that 
\[A_0=\alpha A_1+\beta A_3+\gamma A_4.\] 
Multiplying $A_1$ from the right and using the equation \ref{vN-eq1} again we obtain
\begin{equation}\label{vN-eq-3}
A_0A_1=\alpha A_1^2.
\end{equation} 
Hence from \ref{vN-eq-2} and \ref{vN-eq-3} we have $a A_0^2=\alpha A_1^2$. Taking trace we get $a=\alpha$ (since $\tau(A_i^2)=1, \ \forall i$). This means $A_0^2=A_1^2$ and consequently $A_0=A_1$. This is a contradiction as $\tau(A_0A_1)=\langle v_0,v_1\rangle=\frac{2}{\sqrt{6}}(\neq 1)$.

\end{proof}
Now we show an example of a map whose Choi matrix is nonnegative but it is not positively factorizable.
\begin{theorem}
Consider the correlation matrix \[W=\begin{bmatrix}
1 & \frac{2}{\sqrt{6}} & 0 & 0 & \frac{1}{3}\\
\frac{2}{\sqrt{6}} & 1 & \frac{1}{2} & 0 & 0\\
0 & \frac{1}{2} & 1 & \frac{1}{2} & 0\\
0 & 0 & \frac{1}{2} & 1 & \frac{2}{\sqrt{6}}\\
\frac{1}{3} & 0 & 0 & \frac{2}{\sqrt{6}} & 1
\end{bmatrix}.\] 
The Schur product map associated with $W$ is not in $\mathcal{P}\mathcal{F}(5)$, although the Choi matrix of this map is an entrywise nonnegative psd matrix.
\end{theorem}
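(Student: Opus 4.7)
The plan is to assemble this theorem as a direct application of Proposition \ref{prop:Schur-PF} together with Theorem \ref{thm: no finite vn algebra}, after first verifying the two claimed properties of $J(S_W)$.

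First I would check that $W$ is in fact a correlation matrix with nonnegative entries. By construction $W = (\langle v_i, v_j\rangle)_{i,j=0}^4$ is a Gram matrix, hence positive semidefinite; since each $v_i$ is a unit vector, the diagonal entries are $1$. Entrywise nonnegativity is immediate from the displayed form of $W$. Next I would translate these properties into properties of the Choi matrix of $S_W$. A direct computation gives
\[
J(S_W) = \sum_{i,j=1}^{5} E_{ij}\otimes S_W(E_{ij}) = \sum_{i,j=1}^{5} w_{ij}\, E_{ij}\otimes E_{ij},
\]
which, viewed as a $25\times 25$ matrix, has $w_{ij}$ at position $((i,i),(j,j))$ and zeros elsewhere. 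Thus $J(S_W)$ is unitarily equivalent to $W\oplus 0$, so it is positive semidefinite and entrywise nonnegative exactly because $W$ is.

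The remaining task is to show $S_W \notin \mathcal{PF}(5)$. Here I would simply invoke Proposition \ref{prop:Schur-PF}: if $S_W$ were positively factorizable, then there would exist a finite von Neumann algebra $(\mathcal{A},\tau)$ and positive operators $Z_0,\ldots,Z_4 \in \mathcal{A}$ satisfying $W_{ij} = \tau(Z_iZ_j)$ for all $i,j$. But this is precisely the situation ruled out by Theorem \ref{thm: no finite vn algebra}. Hence no such factorization can exist, and $S_W$ is not positively factorizable.

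The proof therefore reduces to bookkeeping: identifying the Choi matrix of a Schur map in terms of the correlation matrix, and then chaining the two previous results. There is no genuine obstacle here, since the hard combinatorial work, namely deriving a contradiction from the orthogonality pattern of the Gram vectors $v_0,\ldots,v_4$, has already been carried out in Theorem \ref{thm: no finite vn algebra}.
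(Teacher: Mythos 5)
Your proposal is correct and follows essentially the same route as the paper: the paper's proof is exactly the two-line chaining of Proposition \ref{prop:Schur-PF} with Theorem \ref{thm: no finite vn algebra}, and your additional verification that $J(S_W)$ is (up to a permutation of coordinates) $W\oplus 0$, hence doubly nonnegative, is just the bookkeeping the paper leaves implicit.
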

\begin{proof}
The proof follows from the Proposition \ref{prop:Schur-PF} and the Theorem \ref{thm: no finite vn algebra}.
\end{proof}
\begin{remark}
Note that the vectors $\{v_0,\cdots, v_4\}$ appearing above give rise to a self-dual polyhedral cone in $\mathbb{R}^3$ as was shown in \cite{barker1976self}. Matrices like $W$ above which can not be realized as trace inner product in  any finite von-Neumann algebra has been investigated before (see \cite{weiner},\cite{Laurent1}) in connection with the strict inclusion of completely positive semidefinite cone inside the nonnegative cone. 
Our example above is new and related to self-dual cones. It also provides an example of a doubly nonnegative matrix which is not in the  closure of CPSD matrices.
Whether there is a connection to self-dual cones and these correlation matrices that can not be realized as trace inner-product in von-Neumann algebras is an interesting avenue for future research.
\end{remark}
\subsection{Unextendible Product Bases}
The example above has an interesting connection to what are known as \emph{unextendible product bases}.
\begin{definition} [see \cite{divincenzo}] Given two sets of vectors $\{u_i\}_{i=1}^n \subseteq \C^{d_1}$, $\{v_i\}_{i=1}^n \subseteq \C^{d_2}$, the vectors form an unextendible product basis (UPB) if the tensor products

$$\{u_i\otimes v_i\}_{i=1}^n \subseteq \C^{d_1}\otimes \C^{d_2}$$ are mutually orthogonal, and if there exists no product vector $x\otimes y \in \C^{d_1}\otimes \C^{d_2}$ in the orthogonal complement $\{u_i\otimes v_i\}^{\perp}$. 
\end{definition}

In fact, the idea behind a UPB can be extended to arbitrarily many tensor factors, though we will continue to focus only on the bipartite case.

The orthogonality condition on the tensor products means that for each pair $(i,j)$, $i\neq j$, at least one of $\langle u_i,u_j\rangle$ or $\langle v_i,v_j\rangle$ must be $0$; the pattern of zero inner products is captured by means of the \emph{orthogonality graph} of the UPB, $G$. This is a graph $G$ with an edge $(i,j)$ if and only if $\langle u_i,u_j\rangle = 0$; then necessarily the orthogonality graph for the $\{v_i\}$ is the complement of $G$. 

We briefly collect some important observations on bipartite UPBs, for more see \cite{divincenzo} \cite{alon}.

\begin{lemma}\label{divinlem}\cite{divincenzo} Suppose $\{u_i\otimes v_i\}_{i=1}^n \subseteq \C^d\otimes \C^d$ is a bipartite UPB; then for any subset $S\subseteq \{1,2,\cdots, n\}$, $|S|=n-d+1$, the set $\{u_i\}_{i\in S}$ must span $\C^d$ (and by symmetry, the same is true for $\{v_i\}_{i\in S}$).
\end{lemma}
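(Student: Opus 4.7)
The plan is to argue by contradiction, exploiting the definition of unextendibility directly. Suppose for some $S \subseteq \{1,\dots,n\}$ with $|S| = n-d+1$ the set $\{u_i\}_{i \in S}$ fails to span $\C^d$. Then the orthogonal complement of $\mathrm{span}\{u_i : i \in S\}$ inside $\C^d$ is nontrivial, so I can pick a nonzero vector $x \in \C^d$ with $\langle u_i, x \rangle = 0$ for every $i \in S$.

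Next, I would turn attention to the complementary index set $S^c := \{1,\dots,n\} \setminus S$, which has cardinality $n - (n-d+1) = d-1$. Hence $\{v_i\}_{i \in S^c}$ consists of at most $d-1$ vectors in $\C^d$ and therefore cannot span the whole space, so I can pick a nonzero $y \in \C^d$ with $\langle v_i, y \rangle = 0$ for every $i \in S^c$.

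Now consider the product vector $x \otimes y \in \C^d \otimes \C^d$, which is nonzero since $x, y$ are. For each index $i \in \{1,\dots,n\}$, either $i \in S$, in which case $\langle u_i \otimes v_i, x \otimes y \rangle = \langle u_i, x \rangle \langle v_i, y \rangle = 0$ from the first factor, or $i \in S^c$, in which case the same inner product vanishes via the second factor. Thus $x \otimes y$ lies in the orthogonal complement of $\{u_i \otimes v_i\}_{i=1}^n$, contradicting unextendibility of the UPB. The symmetric statement for $\{v_i\}_{i \in S}$ follows by swapping the roles of the two tensor factors throughout.

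I do not anticipate a serious obstacle here; the entire argument is a dimension count combined with the definition of a UPB. The only subtlety worth flagging is making sure that $x$ and $y$ are both nonzero, which is exactly what forces the chosen cardinalities: $|S| = n-d+1$ is just small enough that $n-|S| = d-1 < d$, guaranteeing room for a nontrivial $y$, while any larger $S$ (or failure of $\{u_i\}_{i\in S}$ to span) supplies a nontrivial $x$. The whole proof fits in a short paragraph once these two pigeonhole observations are in hand.
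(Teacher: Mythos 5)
Your proposal is correct and follows essentially the same argument as the paper: a dimension count on the complementary set $S^c$ of size $d-1$ to produce $y$, the assumed failure of $\{u_i\}_{i\in S}$ to span to produce $x$, and then the product vector $x\otimes y$ orthogonal to every $u_i\otimes v_i$ contradicts unextendibility. The only cosmetic difference is that the paper phrases it by first fixing $y$ and then invoking the UPB property to demand some $j\in S$ with nonzero overlap before choosing $x$, whereas you assemble the contradiction in one step; the content is identical.
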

\begin{proof} This is a slightly weaker verson of the bipartite, $d_1 = d_2$ case of Lemma 1 from \cite{divincenzo}. The argument is simple enough that we will recapitulate it here. 

Consider any partition of $\{1,2,\cdots ,n \} = S \cup S^C$ where $|S|=n-d+1$. Since $|S^C| = d-1$, there is necessarily a vector $y\in \{v_i\}_{i\in S^C}^{\perp}$,  as $\{v_i\}_{i\in S^C}$ can span a subspace of $\C^d$ whose dimension is at most $d-1$. Thus $\langle x\otimes y,u_i\otimes v_i\rangle = 0$ for any $i\in S^C$. Since $\{u_i\otimes v_i\}$ form a UPB, there must be a $j\in S$ such that $\langle x\otimes y, u_j\otimes v_j\rangle \neq 0$ for each $x$. However, if $\{u_j\}_{j\in S}$ does not span $\C^d$, then it spans a space of dimension at most $d-1$, and so we can pick $x\in \{u_j\}_{j\in S}^{\perp}$ to contradict the above. 
\end{proof}

\begin{corollary} A bipartite UPB on $\C^d\otimes \C^d$ has minimal size $n=2d-1$.
\end{corollary}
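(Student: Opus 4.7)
The plan is to derive the lower bound directly from Lemma \ref{divinlem}. That lemma already does all the heavy lifting: it tells us that for \emph{any} subset $S \subseteq \{1,\dots,n\}$ of size $n-d+1$, the vectors $\{u_i\}_{i\in S}$ must span $\C^d$. So the only thing I need to do is observe that a set of vectors can span $\C^d$ only if it contains at least $d$ of them.

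Concretely, I would argue as follows. Assume $\{u_i \otimes v_i\}_{i=1}^n$ is a bipartite UPB on $\C^d \otimes \C^d$. Pick any $S \subseteq \{1,\dots,n\}$ with $|S| = n-d+1$ (if such an $S$ exists, i.e.\ if $n \geq d-1$). By Lemma \ref{divinlem}, $\mathrm{span}\{u_i\}_{i \in S} = \C^d$, and since $\dim \C^d = d$, this forces $|S| \geq d$, i.e.\ $n - d + 1 \geq d$, which rearranges to $n \geq 2d-1$. (The corner cases $n < d-1$ are ruled out even more easily: fewer than $d-1$ product vectors in $\C^d \otimes \C^d$ cannot possibly be unextendible, since the orthogonal complement of their span has dimension greater than $d^2 - d + 1$, and standard dimension arguments produce a product vector inside it.)

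There is no real obstacle here: the content of the bound is entirely carried by Lemma \ref{divinlem}, and the corollary is just the pigeonhole observation that spanning $\C^d$ requires at least $d$ vectors.
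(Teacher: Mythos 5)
Your argument is correct and establishes the same content as the paper does (both proofs give only the lower bound $n\ge 2d-1$; neither addresses attainability), but the route is slightly different. You use Lemma \ref{divinlem} as a black box and finish by pigeonhole: a subset of size $n-d+1$ that spans $\C^d$ forces $n-d+1\ge d$. The paper instead reruns the partition argument directly: if $n\le 2(d-1)$, split the index set into $S\cup S^C$ with $|S|,|S^C|\le d-1$, pick $x\perp\{u_i\}_{i\in S}$ and $y\perp\{v_j\}_{j\in S^C}$, and note $x\otimes y$ is orthogonal to every $u_i\otimes v_i$, contradicting unextendibility. The paper's version has the small advantage of covering all $n\le 2d-2$ uniformly, whereas your deduction from the lemma only applies once a subset of size $n-d+1\ge 0$ exists, which is why you need the side case $n<d-1$. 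Your handling of that case is correct but heavier than necessary: you invoke the nontrivial fact that any subspace of $\C^d\otimes\C^d$ of dimension exceeding $(d-1)^2$ contains a product vector, which is not proved (or needed) in the paper. A one-line alternative suffices: if $n\le d-1$ then $\{u_i\}_{i=1}^n$ already fails to span $\C^d$, so choosing $x\perp\mathrm{span}\{u_i\}$ makes $x\otimes y$ orthogonal to all $u_i\otimes v_i$ for any $y$, contradicting unextendibility. With that simplification your proof is a clean corollary-style deduction from Lemma \ref{divinlem}, arguably more in the spirit of how the lemma is meant to be used than the paper's direct repetition of its proof.
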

This is a consequence of Lemma $1$ from \cite{divincenzo}: if $n\leq 2(d-1)$ then pick a partition $\{1,2,\cdots,n\} = S\cup S^C$ where $|S|,|S^C|\leq d-1$. Then there is necessarily an $x\in \{u_i\}_{i\in S}^{\perp}$ and a $y\in \{v_j\}_{j\in S^C}^{\perp}$ and $\langle x\otimes y,u_i\otimes u_j \rangle = 0$ for all $i$. 

\begin{corollary} In the minimal case $n=2d-1$, Lemma \ref{divinlem} says that any subset of size $d$ of the $\{u_i\}$ must be a basis.
\end{corollary}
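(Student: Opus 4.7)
The proof is essentially immediate from Lemma \ref{divinlem} combined with a dimension count, so my plan is just to substitute the value $n = 2d-1$ into the hypothesis of that lemma and observe that the resulting spanning set has exactly the right cardinality to be a basis.

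More precisely, the plan is as follows. First I would invoke Lemma \ref{divinlem}, which tells us that for any subset $S \subseteq \{1,2,\ldots,n\}$ of size $|S| = n-d+1$, the set $\{u_i\}_{i\in S}$ spans $\C^d$. Then I would substitute $n = 2d-1$ into the size condition, obtaining $|S| = (2d-1) - d + 1 = d$. Thus any subset of $\{u_i\}$ of size exactly $d$ spans $\C^d$. Finally, since a spanning set of size $d$ in a $d$-dimensional space is automatically linearly independent, such a subset is a basis, which is the claim.

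There is no genuine obstacle here: the only thing to check is the arithmetic $n-d+1 = d$ when $n = 2d-1$, and the standard linear-algebra fact that $d$ vectors spanning a $d$-dimensional space form a basis. The corollary is really just a rephrasing of Lemma \ref{divinlem} in the minimal-size case, highlighting the symmetry between spanning and linear independence that forces any $d$-element subset of the first tensor factors (and by the symmetric argument, of the $\{v_i\}$ as well) to be a basis.
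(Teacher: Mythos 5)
Your argument is correct and is exactly the reasoning the paper intends: substituting $n = 2d-1$ into Lemma \ref{divinlem} gives $n-d+1 = d$, so every size-$d$ subset spans $\C^d$, and a spanning set of cardinality $d$ is automatically a basis. The paper treats the corollary as an immediate restatement of the lemma, so your proof matches its approach.
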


Alon and Lovasz were able to show that the condition that all subsets of a certain cardinality must span a subspace of some minimum dimension constrains the possible orthogonality graphs underyling a UPB. The following Lemma is their Theorem $3.1$.
\begin{lemma}\label{alonlemma}\cite{alon} The orthogonality graph of the vectors $\{u_i\}_{i=1}^n\subseteq\C^d$ that make up one tensor factor of a UPB must be $n-d$ vertex-connected.
\end{lemma}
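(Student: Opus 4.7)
The plan is to argue by contradiction, using the UPB condition as an obstruction to the existence of a product state in $\{u_i\otimes v_i\}^\perp$. Suppose for contradiction that $G$ is not $(n-d)$-vertex-connected, so there is a subset $T\subseteq [n]$ with $|T| \leq n-d-1$ whose removal disconnects $G$ into nonempty parts $A$ and $B$. Because there are no $G$-edges between $A$ and $B$, $\langle u_i,u_j\rangle \neq 0$ for every $i\in A$, $j\in B$. Since the $u_i\otimes v_i$ are mutually orthogonal, this forces $\langle v_i,v_j\rangle = 0$ for every such pair. Setting $V_A = \mathrm{span}\{v_i : i\in A\}$ and $V_B = \mathrm{span}\{v_j : j\in B\}$, I obtain $V_A \perp V_B$ and therefore $\dim V_A + \dim V_B \leq d$.

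Next I would exploit the spanning property of Lemma \ref{divinlem}. Since $v_i\neq 0$ and $V_A\perp V_B$, neither $V_A$ nor $V_B$ can equal $\C^d$, so applying Lemma \ref{divinlem} to $\{v_i\}_{i\in A}$ and $\{v_i\}_{i\in B}$ gives $|A|, |B| \leq n-d$ and in particular $\dim V_A \leq d-1$, $\dim V_B \leq d-1$. In the most delicate (minimal) case $n\leq 2d$, the set $A\cup B$ itself has size at least $n-d+1$, so Lemma \ref{divinlem} applied to it yields $V_A + V_B = \C^d$, forcing $V_B = V_A^\perp$ and $\dim V_A + \dim V_B = d$.

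The contradiction with the UPB property will come from constructing a nonzero product vector $x\otimes y$ orthogonal to every $u_i\otimes v_i$, i.e., such that for each $i$ either $\langle x,u_i\rangle = 0$ or $\langle y,v_i\rangle = 0$. The natural move is to choose $y\in V_B$, which automatically gives $\langle y,v_i\rangle = 0$ for all $i\in A$, and then impose additional conditions $\langle y,v_i\rangle = 0$ for as many indices $i\in T$ as the dimension of $V_B$ permits, each such condition cutting $V_B$ down by at most one dimension. If the resulting set $S = \{i : \langle y,v_i\rangle = 0\}$ can be arranged to have size at least $n-d+1$, then its complement has size at most $d-1$; fewer than $d$ vectors in $\C^d$ cannot span, so a nonzero $x$ with $\langle x,u_i\rangle = 0$ for all $i\notin S$ exists, and $x\otimes y$ contradicts unextendibility.

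The main obstacle is making the dimension count in the last step go through in the worst case where both $|A|$ and $|B|$ are small. One needs $\dim V_B + |T|_{\mathrm{usable}} \geq a + (n-d+1) - a = n-d+1-a + \dim V_B$ worth of freedom, i.e., the dimension of $V_B$ together with how many indices of $T$ can be absorbed into $S$ must cover the shortfall. Here one uses the symmetric application of Lemma \ref{divinlem} to $U_{[n]\setminus S}$ as a backup: if too few $T$-indices can be added to $S$, then conversely $\{u_i : i\notin S\}$ must fail to span $\C^d$ by a direct cardinality argument, again producing $x$. The cleanest version of this dichotomy is proved in \cite{alon} under a generic position assumption on the $u_i$ and $v_i$, which may be made without loss of generality since the set of UPBs realizing a fixed orthogonality graph forms an irreducible algebraic variety, so a generic member suffices.
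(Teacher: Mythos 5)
You should first note that the paper does not prove Lemma \ref{alonlemma} at all: it is quoted from \cite{alon} (their Theorem 3.1), so your argument has to stand entirely on its own, and as written it does not. Your opening is the standard and correct one (a separator $T$ with $|T|\le n-d-1$ splits the vertices into $A$ and $B$ with no $G$-edges across, hence $\langle v_i,v_j\rangle=0$ for $i\in A$, $j\in B$, so $V_A\perp V_B$, and the bounds $|A|,|B|\le n-d$ follow from Lemma \ref{divinlem}), but the heart of the proof --- producing a nonzero product vector in $\{u_i\otimes v_i\}^{\perp}$ --- is left open. The requirement your construction needs is $|A|+\min\bigl(|T|,\dim V_B-1\bigr)\ge n-d+1$, so that the complement of $S=\{i:\ \langle y,v_i\rangle=0\}$ has at most $d-1$ elements; you concede this may fail when both $|A|$ and $|B|$ are small, your displayed count is garbled (its middle expression is just $n-d+1$ and does not match the right-hand side), and the proposed backup is circular: non-spanning of $\{u_i:i\notin S\}$ is exactly what must be shown, and the only ``direct cardinality argument'' available is the inequality in doubt. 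What actually closes the minimal case $n=2d-1$ is an observation you never make: since $|A|+|B|=n-|T|\ge d+1$ while $\dim V_A+\dim V_B\le d$, at least one part, say $A$, has $|A|\ge\dim V_A+1$; combined with $\dim V_A+\dim V_B=d$ and $|B|\le n-d$ this yields $|A|+\dim V_B\ge d+1$ and $|A|+|T|\ge d$, which is precisely what your choice of $y\in V_B$ (cut down by at most $\dim V_B-1$ further conditions indexed by $T$) requires. Without that deficiency argument your proof stalls exactly when $\{v_i\}_{i\in A}$ and $\{v_i\}_{i\in B}$ are both linearly independent families.

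Two further problems. The case analysis is backwards: for a construction that forces $|S^{c}|\le d-1$ by counting, the troublesome regime is $n>2d$, not $n\le 2d$, since then $|B\cup T|$ can greatly exceed $d-1$ and the at most $\dim V_B-1$ extra conditions on $y$ cannot bridge the gap; that regime, which the lemma also covers, is simply not treated. Finally, the concluding appeal to genericity is not a legitimate repair: the assertion that the UPBs realizing a fixed orthogonality graph form an irreducible algebraic variety is unproved (prescribing the orthogonality graph exactly, and unextendibility itself, are not Zariski-closed conditions), the specific ``general position'' facts you would need are never identified, and deferring ``the cleanest version of this dichotomy'' to \cite{alon} is circular, because the statement being proved is that very theorem of \cite{alon}. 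In sum, the proposal is a reasonable opening --- correct through $V_A\perp V_B$ and $|A|,|B|\le n-d$ --- but the construction of the forbidden product vector, which is the entire content of the lemma, is not carried through.
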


From the foregoing, we see that the smallest possible $n$ for a UPB in $\C^3\otimes \C^3$ is $n=5$, and it can be shown that the only orthogonality graph for such a UPB is the $5$-cycle, $C_5$. We note that this is the orthogonality graph of the matrix $W$ from our Theorem $6.4$ above. The following shows that this is not entirely a coincidence.

In what follows, given a graph $G$, we use the notation $N(i)$ to refer to the neighbours of a vertex $i$, that is $N(i) = \{j: (i,j) \in E(G)\}$, and $N[i]$ is the closed neighbour set, inclusive of $i$: $N[i] = N(i)\cup \{i\}$.

\begin{lemma}\label{sikoralem} Let $G$ be the orthogonality graph of a set of unit vectors $\{u_i\}_{i=1}^n \subseteq \C^d$. If there exist two indices $(i,j)$ such that $\{u_i,u_j\}$ is a linearly independent set, and such that $\{u_k\}_{k\in N[i]}$ and $\{u_k\}_{k\in N[j]}$ are both of size $d$ and linearly independent, then the associated Gram matrix is not CPSD.
\end{lemma}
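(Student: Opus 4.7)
The plan is to argue by contradiction, mirroring the strategy of Theorem~\ref{thm: no finite vn algebra}. Suppose for contradiction that the Gram matrix $W=(\langle u_k,u_l\rangle)$ is CPSD (or, more generally, lies in its closure). Then there exist positive operators $\{A_k\}_{k=1}^n$ in some finite tracial von Neumann algebra $(\mathcal{A},\tau)$ with $\tau(A_kA_l)=\langle u_k,u_l\rangle$ for all $k,l$. The goal is to use the two bases supplied by $N[i]$ and $N[j]$ to force the equality $A_i=A_j$, which will collide with the linear independence of $\{u_i,u_j\}$.

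Before the main computation I would record two general facts. First, the equality $\tau(A_kA_l)=\langle u_k,u_l\rangle$ is exactly the statement that $u_k\mapsto A_k$ extends to a linear isometry from $\mathrm{span}\{u_k\}_{k=1}^n=\C^d$ into $L^2(\mathcal{A},\tau)$, so every linear relation among the $u_k$ lifts verbatim to the same relation among the $A_k$. Second, for each edge $(k,l)\in E(G)$ one has $\tau(A_lA_k)=\langle u_k,u_l\rangle=0$, and positivity of $A_k,A_l$ together with faithfulness of $\tau$ then forces $A_kA_l=A_lA_k=0$, via the same $\tau(A_l^{1/2}A_kA_l^{1/2})=0$ argument used in the preceding theorem.

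The heart of the proof is a double one-sided multiplication. Expanding $u_j=\sum_{k\in N[i]}\alpha_k u_k$ in the basis $\{u_k\}_{k\in N[i]}$ and pairing with $u_i$ kills every term with $k\in N(i)$ and leaves $\alpha_i=\langle u_i,u_j\rangle$. Lifting the expansion to operators and multiplying on the left by $A_i$ annihilates every $A_iA_k$ with $k\in N(i)$, giving
$$A_iA_j=\langle u_i,u_j\rangle\,A_i^2.$$
A symmetric computation using the basis $\{u_k\}_{k\in N[j]}$ and multiplying by $A_j$ on the right yields
$$A_iA_j=\langle u_i,u_j\rangle\,A_j^2.$$
When $\langle u_i,u_j\rangle\ne 0$ these two identities combine to $A_i^2=A_j^2$, and uniqueness of the positive square root gives $A_i=A_j$. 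But then $\langle u_i,u_j\rangle=\tau(A_iA_j)=\tau(A_i^2)=1$, which saturates Cauchy--Schwarz and contradicts the linear independence of $\{u_i,u_j\}$.

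The main obstacle is the lifting step: one must verify that the inner-product-preserving assignment $u_k\mapsto A_k$ honestly extends to a $\C$-linear isometry from $\C^d$ into $L^2(\mathcal{A},\tau)$, so that the basis decomposition on the vector side becomes a genuine operator identity with identical scalar coefficients. Everything else is bookkeeping against the orthogonality graph: the hypotheses on $N[i]$ and $N[j]$ supply exactly the two bases needed to set up the symmetric left/right multiplications, and the edge case $\langle u_i,u_j\rangle=0$ is handled implicitly in the intended UPB applications, where the lemma is invoked on non-adjacent pairs for which $\langle u_i,u_j\rangle\ne 0$ automatically.
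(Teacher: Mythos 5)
Your proof is the paper's argument in all essentials: transfer the two basis expansions from the vectors $u_k$ to the operators $A_k$ (justified, as you note, by equality of the Gram matrices), use positivity plus faithfulness of the trace to turn $\tau(A_kA_l)=0$ into $A_kA_l=0$ along edges of $G$, multiply one expansion on the left by $A_i$ and the other on the right by $A_j$, and conclude $A_i=A_j$ from uniqueness of positive square roots, contradicting the independence of $\{u_i,u_j\}$. The only substantive difference is that you evaluate the surviving coefficients explicitly as $\langle u_i,u_j\rangle$, whereas the paper leaves them as unspecified scalars $c_j,d_i$ and asserts that $A_i^2$ is a scalar multiple of $A_j^2$.

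That extra bookkeeping is exactly where a genuine gap sits, and the way you discharge it does not work for this paper. If $\langle u_i,u_j\rangle=0$, both of your identities collapse to $A_iA_j=0$ and nothing follows; the paper's proof has the identical hole, since its scalars are precisely $c_j=\langle u_j,u_i\rangle$ and $d_i=\langle u_i,u_j\rangle$, which vanish in that case. Indeed the lemma as literally stated fails there: for $n=d=2$ with $u_1=e_1$, $u_2=e_2$, all hypotheses hold ($\{u_1,u_2\}$ independent, $N[1]=N[2]=\{1,2\}$ a basis of size $d$), yet the Gram matrix $I_2$ is CPSD. Moreover, your claim that the degenerate case never arises because the lemma is ``invoked on non-adjacent pairs'' is the opposite of what the paper does: the theorem that follows applies the lemma to a pair joined by an edge of $G$, i.e.\ to a mutually orthogonal pair. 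So the correct repair is to add the hypothesis $\langle u_i,u_j\rangle\neq 0$ (equivalently $(i,j)\notin E(G)$), under which your argument, like the paper's, is complete, and to run the downstream UPB application on a non-adjacent pair; such a pair exists since $2d-1$ pairwise orthogonal vectors cannot fit in $\C^d$, and in the minimal UPB case any two of the $u_i$ are automatically independent because every $d$-subset of them is a basis.
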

\begin{proof} This is a generalization of Theorem $34$ from \cite{sikora}, whose proof we adapt. First of all, notice that the two sets must each be a basis of the space $\C^d$, as they are both independent of cardinality $d$. Thus, 
\begin{align} u_i & = \sum_{k\in N[j]} c_k u_k \\
u_j & = \sum_{k\in N[i]} d_k u_k
\end{align} and so if the Gram matrix is CPSD for some PSD matrices $\{A_i\}$, the same equations must hold for $A_k$ replacing $u_k$. Then multiply the first equation by $A_j$ on the right, and $A_i$ by the second equation on the left to obtain

\begin{align} A_iA_j &= \sum_{k\in N[j]} c_k A_kA_j & = c_j A_j^2 \\
A_i A_j & = \sum_{k\in N[i]} d_k A_iA_k & = d_i A_i^2 
\end{align} 
where the last equality on each line follows from the fact that, by definition of the orthogonality graph, $\langle u_j,u_k\rangle = \mathrm{tr}(A_kA_j) = 0$ for all $k\in N[j]$, and so $A_kA_j = 0$ and similarly for $A_iA_k$ for $k\in N[i]$. 

So $A_i^2$ is a scalar multiple of $A_j^2$; since every positive operator has a unique positive square root, $A_i$ must be a scalar multiple of $A_j$ as well, contradicting that $\{u_i,u_j\}$ is independent.
\end{proof}

\begin{theorem} Let $\{u_i\}_{i=1}^{2d-1}\subseteq \C^d$ be part of a UPB on $\C^d\otimes \C^d$ of minimal size $2d-1$. The Gram matrix of these vectors cannot be CPSD.
\end{theorem}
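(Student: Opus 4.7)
The plan is to verify the three hypotheses of Lemma \ref{sikoralem} directly, by exploiting both Lemma \ref{divinlem} and Lemma \ref{alonlemma} simultaneously on the two sides of the bipartite UPB.

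First I would show that, in the minimal case $n=2d-1$, every vertex of the orthogonality graph $G$ of $\{u_i\}$ has degree exactly $d-1$, so $|N[i]|=d$ for all $i$. Apply Lemma \ref{alonlemma} to both tensor factors: it gives that $G$ and its complement $\bar G$ (which is the orthogonality graph of $\{v_i\}$) are both $(n-d)=(d-1)$-vertex-connected, hence each has minimum degree at least $d-1$. Since $\deg_G(i)+\deg_{\bar G}(i)=n-1=2d-2$ for every vertex $i$, both degrees are pinned to exactly $d-1$, as claimed.

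Next I would apply the corollary of Lemma \ref{divinlem}: in a minimal UPB, any $d$-subset of $\{u_i\}$ is a basis of $\C^d$. Since $|N[i]|=|N[j]|=d$ for all vertices by the previous step, the sets $\{u_k\}_{k\in N[i]}$ and $\{u_k\}_{k\in N[j]}$ are automatically linearly independent, which gives conditions (b) and (c) of Lemma \ref{sikoralem} for any choice of $i,j$. For condition (a), I just pick $(i,j)$ to be any edge of $G$, which exists because $\deg_G(i)=d-1\geq 1$ once $d\geq 2$; by definition of the orthogonality graph, $\langle u_i,u_j\rangle=0$, and since both vectors are unit vectors, they are linearly independent. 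Invoking Lemma \ref{sikoralem} on this pair $(i,j)$ immediately concludes that the Gram matrix is not CPSD. The edge cases $d=1,2$ can be dispatched separately (for $d=2$ no UPB exists in $\C^2\otimes\C^2$, and $d=1$ is trivial), so the argument is substantive for $d\geq 3$.

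There is no serious obstacle here; the only nonroutine observation is the complementary-degree argument that forces $|N[i]|=d$ exactly, and this is precisely the size requirement demanded by Lemma \ref{sikoralem}. Once this degree-regularity is established, the three lemmas \ref{divinlem}, \ref{alonlemma}, and \ref{sikoralem} lock together with essentially no extra work.
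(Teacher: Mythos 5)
Your proof is correct and follows essentially the same route as the paper: verify the hypotheses of Lemma \ref{sikoralem} via Lemma \ref{alonlemma} (minimum degree) and Lemma \ref{divinlem} (any $d$-subset is a basis in the minimal case), then take any edge of $G$ to supply the linearly independent pair. Your complementary-degree argument pinning $\deg_G(i)$ to exactly $d-1$ is a small refinement the paper glosses over (it only asserts minimum degree at least $d-1$), and it makes the ``size exactly $d$'' hypothesis of Lemma \ref{sikoralem} explicit.
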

\begin{proof} From Lemma \ref{alonlemma} the orthogonality graph of $G$ has to have minimum degree at least $n-d = d-1$, so that $\{u_k\}_{k\in N[i]}$ is a set of size $d$ for every $i$, and hence from Lemma \ref{divinlem}, must span; thus for any two indices, the sets $\{u_k\}_{k\in N[i]}$ and $\{u_k\}_{k\in N[j]}$ both span $\C^d$; so long as any two indices $u_i$, $u_j$ are linearly independent, the conditions of Lemma \ref{sikoralem} apply. 

Since there is at least one edge in $G$, there is at least one pair $\{u_i,u_j\}$ that are mutually orthogonal and hence linearly independent. 
\end{proof}

This result in itself is not so interesting, as there is no guarantee that the Gram matrix resulting from a UPB is even DNN, in which case we would certainly not expect it to be CPSD. What is perhaps more interesting, is at least in the case $d=3$, the orthogonality graph and rank of a UPB are enough to rule out a Gram matrix being CPSD. 

\begin{theorem} No correlation matrix whose rank is $3$ and whose orthogonality graph is the orthogonality graph of a minimal UPB in $\C^3 \otimes \C^3$ can be CPSD.
\end{theorem}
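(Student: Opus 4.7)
The plan is to apply Lemma \ref{sikoralem} with $d=3$. Recall from the preceding discussion that the only orthogonality graph for a minimal UPB in $\C^3\otimes\C^3$ is the $5$-cycle $C_5$. Let $M$ be a $5\times 5$ correlation matrix of rank $3$ whose off-diagonal zeros occur exactly on the edges of $C_5$. Since $M\succeq 0$ has rank $3$, it admits a Gram representation by unit vectors $u_0,\ldots,u_4\in\C^3$ spanning $\C^3$, with $\langle u_i,u_j\rangle=0$ whenever $i\sim j$ in $C_5$. Each closed neighborhood $N[i]=\{i-1,i,i+1\}$ has exactly $3=d$ elements, so the lemma will apply as soon as I exhibit an edge $(i,j)\in E(C_5)$ for which both $\{u_k\}_{k\in N[i]}$ and $\{u_k\}_{k\in N[j]}$ are linearly independent; the independence of $\{u_i,u_j\}$ itself is automatic since they are orthogonal unit vectors.

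The key reduction is the following observation: since $u_i\perp u_{i-1}$ and $u_i\perp u_{i+1}$, the triple $\{u_{i-1},u_i,u_{i+1}\}$ is linearly dependent if and only if $u_{i-1}$ and $u_{i+1}$ are parallel (scalar multiples of one another). Thus the only obstruction to producing a suitable edge is a pattern of forced parallelisms among the $u_i$'s, which I will show is incompatible with the orthogonality structure of $C_5$ and the unit-norm condition.

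Let $S=\{i:N[i]\text{ is linearly independent}\}$. Suppose for contradiction that no edge of $C_5$ has both endpoints in $S$; then $S$ is an independent set in $C_5$, so $|S|\le 2$. I will handle the three cases $|S|\in\{0,1,2\}$ by chasing the relations $u_{i-1}\parallel u_{i+1}$ (for $i\notin S$) cyclically and invoking one edge of $C_5$. In each case the outcome is two vectors that are simultaneously parallel and orthogonal, forcing one of them to be zero and contradicting $\|u_k\|=1$. For example, when $|S|=2$, say $S=\{0,2\}$, the dependence of $N[1]$, $N[3]$, $N[4]$ yields $u_0\parallel u_2\parallel u_4$ and $u_3\parallel u_0$; combined with the edge $u_2\perp u_3$ of $C_5$, this forces $u_3=0$. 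The cases $|S|=0,1$ are even shorter, as more parallelisms are available to chain.

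Once a good edge is produced, Lemma \ref{sikoralem} immediately yields that $M$ is not CPSD. The only real work is the combinatorial case analysis in the paragraph above, and this is short because $C_5$ has independence number $2$ and is vertex-transitive. No new analytic tool beyond Lemma \ref{sikoralem} is required.
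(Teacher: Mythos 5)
Your proposal is correct and follows essentially the same route as the paper: reduce to Lemma \ref{sikoralem} with $d=3$ via the $C_5$ orthogonality graph, using the key observation that linear dependence of a closed neighbourhood $\{u_{i-1},u_i,u_{i+1}\}$ forces $u_{i-1}\parallel u_{i+1}$. The only difference is the finishing step: the paper rules out such parallelism in one line (parallel Gram vectors would have identical neighbour sets, and no two vertices of $C_5$ do, so \emph{every} closed neighbourhood is independent), whereas your case analysis on the independent set $S$ reaches the same conclusion with somewhat more work, but is sound.
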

\begin{proof} This essentially follows from the observation that any rank-$3$ correlation matrix whose orthogonality graph is isomorphic to the $5$-cycle, the only possible orthogonality graph for a $5$-member UPB on $\C^3\otimes \C^3$, necessarily has all of its $3\times 3$ minors corresponding to a neighbour set of some vertex positive; that is, all size-$3$ subsets of the Gram vectors of the form $\{u_k\}_{k\in N[i]}$ form a basis. Thus the proof from the previous Theorem may be applied. 

To see this, suppose $i$ is a vertex whose neighbours are $j,k$ and suppose there is a linear dependency among the three vectors. By the fact that they are neighbours in the orthogonality graph, $u_j,u_k \in u_i^{\perp}$ and so each pair $\{u_i,u_j\}$ and $\{u_i,u_k\}$ is linearly independent--thus it must be that $\{u_j,u_k\}$ are linearly dependent, i.e, $u_j = \lambda u_k$. But if so, then any vector orthogonal to $u_j$ must be orthogonal to $u_k$ and vice-versa, and so the two vectors have the same neighbour set; but no vertices in the $5$-cycle have the same neighbour set. 

\end{proof}

Since a correlation matrix $C$ is CPSD if and only if the associated Schur product map $T_C(X) = X\circ C$ is PF, we see that there appears to be an interesting connection between the combinatorial structure of the zeros in the Choi matrix of a map, and whether it can be PF; in particular, if the zero pattern of the Choi matrix is related to the orthogonality graph of a minimal UPB, it may prevent the map from being PF.  
\section{acknowledgements}
The authors thank Vern Pauslen for reading the earlier version of this paper and his comments. The authors also like to thank the referee for carefully reading the manuscript.
MR is supported by the Start-up Research Grant by the Science and Engineering Research Board (Govt. of India). 

\bibliographystyle{alpha}

\bibliography{PFbib}

\newcommand{\etalchar}[1]{$^{#1}$}
\begin{thebibliography}{AMR{\etalchar{+}}19}

\bibitem[AD06]{A-D}
Claire Anantharaman-Delaroche.
\newblock On ergodic theorems for free group actions on noncommutative spaces.
\newblock {\em Probability Theory and Related Fields}, 135(4):520, 2006.

\bibitem[AL01]{alon}
Noga Alon and L{\'a}szl{\'o} Lov{\'a}sz.
\newblock Unextendible product bases.
\newblock {\em Journal of combinatorial theory. Series A}, 95(1):169--179,
  2001.

\bibitem[AMR{\etalchar{+}}19]{graph-iso}
Albert Atserias, Laura Man\v{c}inska, David~E. Roberson, Robert \v{S}\'{a}mal,
  Simone Severini, and Antonios Varvitsiotis.
\newblock Quantum and non-signalling graph isomorphisms.
\newblock {\em J. Combin. Theory Ser. B}, 136:289--328, 2019.

\bibitem[BDM{\etalchar{+}}99]{divincenzo}
Charles~H Bennett, David~P DiVincenzo, Tal Mor, Peter~W Shor, John~A Smolin,
  and Barbara~M Terhal.
\newblock Unextendible product bases and bound entanglement.
\newblock {\em Physical Review Letters}, 82(26):5385, 1999.

\bibitem[BF76]{barker1976self}
George~Philip Barker and James Foran.
\newblock Self-dual cones in euclidean spaces.
\newblock {\em Linear Algebra and its Applications}, 13(1-2):147--155, 1976.

\bibitem[BLP15]{Laurent-2}
Sabine Burgdorf, Monique Laurent, and Teresa Piovesan.
\newblock On the closure of the completely positive semidefinite cone and
  linear approximations to quantum colorings.
\newblock In {\em 10th {C}onference on the {T}heory of {Q}uantum {C}omputation,
  {C}ommunication and {C}ryptography}, volume~44 of {\em LIPIcs. Leibniz Int.
  Proc. Inform.}, pages 127--146. Schloss Dagstuhl. Leibniz-Zent. Inform.,
  Wadern, 2015.

\bibitem[BSM03]{cp-book}
Abraham Berman and Naomi Shaked-Monderer.
\newblock {\em Completely positive matrices}.
\newblock World Scientific Publishing Co., Inc., River Edge, NJ, 2003.

\bibitem[Dia62]{diananda}
P.~H. Diananda.
\newblock On non-negative forms in real variables some or all of which are
  non-negative.
\newblock {\em Proc. Cambridge Philos. Soc.}, 58:17--25, 1962.

\bibitem[FGP{\etalchar{+}}15]{fawzi}
Hamza Fawzi, Jo\~{a}o Gouveia, Pablo~A. Parrilo, Richard~Z. Robinson, and
  Rekha~R. Thomas.
\newblock Positive semidefinite rank.
\newblock {\em Math. Program.}, 153(1, Ser. B):133--177, 2015.

\bibitem[FNT17]{spectrahedra}
Tobias Fritz, Tim Netzer, and Andreas Thom.
\newblock Spectrahedral containment and operator systems with
  finite-dimensional realization.
\newblock {\em SIAM J. Appl. Algebra Geom.}, 1(1):556--574, 2017.

\bibitem[FW14]{weiner}
P\'{e}ter~E. Frenkel and Mih\'{a}ly Weiner.
\newblock On vector configurations that can be realized in the cone of positive
  matrices.
\newblock {\em Linear Algebra Appl.}, 459:465--474, 2014.

\bibitem[HM11]{H-M1}
Uffe Haagerup and Magdalena Musat.
\newblock Factorization and dilation problems for completely positive maps on
  von {N}eumann algebras.
\newblock {\em Comm. Math. Phys.}, 303(2):555--594, 2011.

\bibitem[HM15]{H-M2}
Uffe Haagerup and Magdalena Musat.
\newblock An asymptotic property of factorizable completely positive maps and
  the {C}onnes embedding problem.
\newblock {\em Comm. Math. Phys.}, 338(2):721--752, 2015.

\bibitem[HMPS19]{hmkps}
J.~William Helton, Kyle~P. Meyer, Vern~I. Paulsen, and Matthew Satriano.
\newblock Algebras, synchronous games, and chromatic numbers of graphs.
\newblock {\em New York J. Math.}, 25:328--361, 2019.

\bibitem[LP15]{Laurent1}
Monique Laurent and Teresa Piovesan.
\newblock Conic approach to quantum graph parameters using linear optimization
  over the completely positive semidefinite cone.
\newblock {\em SIAM J. Optim.}, 25(4):2461--2493, 2015.

\bibitem[MM63]{minc}
John~E. Maxfield and Henryk Minc.
\newblock On the matrix equation {$X^{\prime} X=A$}.
\newblock {\em Proc. Edinburgh Math. Soc. (2)}, 13:125--129, 1962/63.

\bibitem[MRv{\etalchar{+}}17]{relaxation}
Laura Man\v{c}inska, David~E. Roberson, Robert \v{S}\'{a}mal, Simone Severini,
  and Antonios Varvitsiotis.
\newblock Relaxations of graph isomorphism.
\newblock In {\em 44th {I}nternational {C}olloquium on {A}utomata, {L}anguages,
  and {P}rogramming}, volume~80 of {\em LIPIcs. Leibniz Int. Proc. Inform.},
  pages Art. No. 76, 14. Schloss Dagstuhl. Leibniz-Zent. Inform., Wadern, 2017.

\bibitem[OP16]{ortiz-paulsen}
Carlos~M. Ortiz and Vern~I. Paulsen.
\newblock Quantum graph homomorphisms via operator systems.
\newblock {\em Linear Algebra Appl.}, 497:23--43, 2016.

\bibitem[PR21]{PR}
Vern~I. Paulsen and Mizanur Rahaman.
\newblock Bisynchronous games and factorizable maps.
\newblock {\em Ann. Henri Poincar\'{e}}, 22(2):593--614, 2021.

\bibitem[PSS{\etalchar{+}}16]{psstw}
Vern~I. Paulsen, Simone Severini, Daniel Stahlke, Ivan~G. Todorov, and Andreas
  Winter.
\newblock Estimating quantum chromatic numbers.
\newblock {\em J. Funct. Anal.}, 270(6):2188--2222, 2016.

\bibitem[PSVW18]{sikora}
Anupam Prakash, Jamie Sikora, Antonios Varvitsiotis, and Zhaohui Wei.
\newblock Completely positive semidefinite rank.
\newblock {\em Mathematical Programming}, 171(1):397--431, 2018.

\bibitem[Rob16]{roberson}
David~E. Roberson.
\newblock Conic formulations of graph homomorphisms.
\newblock {\em J. Algebraic Combin.}, 43(4):877--913, 2016.

\end{thebibliography}

\end{document}